\newtheorem{theorem}{Theorem}[section]
\newtheorem{lemma}{Lemma}[section]
\newtheorem{proposition}{Proposition}[section]
\theoremstyle{definition}
\newtheorem{definition}{Definition}[section]
\newtheorem*{acknowledgements}{Acknowledgements}
\theoremstyle{remark}
\newtheorem{remark}{Remark}[section]
\DeclareMathOperator{\RR}{\mathbb{R}}
\DeclareMathOperator{\Sp}{\mathbb{S}}
\def\PP{\mathbb{P}}
\def\EE{\mathbb{E}}
\def\cA{\mathcal{A}}
\def\cB{\mathcal{B}}
\def\cS{\mathcal{S}}
\def\bZ{\mathbf{Z}}
\def\bV{\mathbf{V}}
\def\bE{\mathbf{E}}
\def\bM{\mathbf{M}}
\def\bS{\mathbf{S}}
\def\bX{\mathbf{X}}
\def\bY{\mathbf{Y}}
\def\sG{\mathscr{G}}
\def\sP{\mathscr{P}}
\def\sT{\mathscr{T}}
\newcommand{\dint}{\operatorname{d}}
\newcommand{\conv}{\operatorname{conv}}
\numberwithin{equation}{section}
\title[A random cell splitting scheme on the sphere]{A random cell splitting scheme on the sphere}
\author[C.\ Deuß]{Christian Deuß}
\address{Christian Deuß: Faculty of Mathematics, NA 3/28, Ruhr University Bochum, Germany}
\email{christian.deuss@rub.de}
\author[J.\ H\"orrmann]{Julia H\"orrmann}
\address{Julia H\"orrmann: Faculty of Mathematics, NA 3/69, Ruhr University Bochum, Germany}
\email{julia.hoerrmann@rub.de}
\author[C.\ Th\"ale]{Christoph Thäle}
\address{Christoph Th\"ale: Faculty of Mathematics, NA 3/68, Ruhr University Bochum, Germany}
\email{christoph.thaele@rub.de}
\subjclass[2010]{52A22, 52A55, 53C65, 60D05, 60G55}
\keywords{Capacity functionals, Markov processes, martingales, Palm distributions, point processes, random polygons, random tessellations, spherical intrinsic volumes, spherical spaces, spherical stochastic geometry.}
\date{}
\begin{document}

\begin{abstract}
A random recursive cell splitting scheme of the $2$-dimensional unit sphere is considered, which is the spherical analogue of the STIT tessellation process from Euclidean stochastic geometry. First-order moments are computed for a large array of combinatorial and metric parameters of the induced splitting tessellations by means of martingale methods combined with tools from spherical integral geometry. The findings are compared with those in the Euclidean case, making thereby transparent the influence of the curvature of the underlying space. Moreover, the capacity functional is computed and the point process that arises from the intersection of a splitting tessellation with a fixed great circle is characterized. 
\end{abstract}

\maketitle

\section{Introduction}

Imagine an apple that shall be cut into pieces. We do this in the following way. In a first step, we perform a straight cut through the centre, which splits the apple into two pieces. Next, we select one of them and cut it again by a straight cut through the apple's centre into two pieces. Now, we select one of these three pieces, cut it in the same way and continue the procedure. After some time has passed, we stop the cutting process, put together all the pieces and observe a tessellation on the surface of the apple that is induced by the cuts, see Figure \ref{fig:Apple} for an illustration. 

\medbreak

\begin{figure}[t]
\begin{center}
\includegraphics[width=0.3\columnwidth]{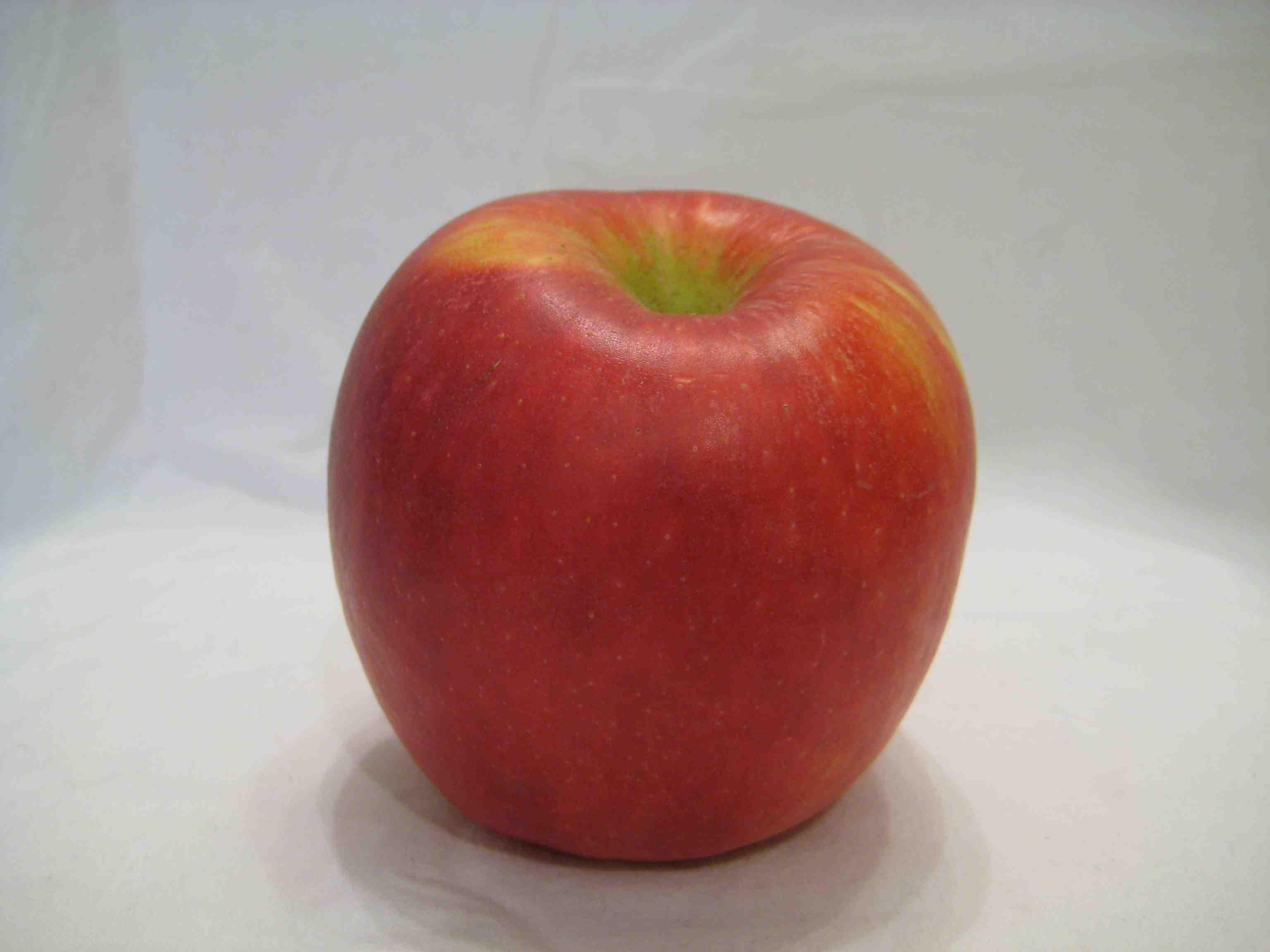}\quad
\includegraphics[width=0.3\columnwidth]{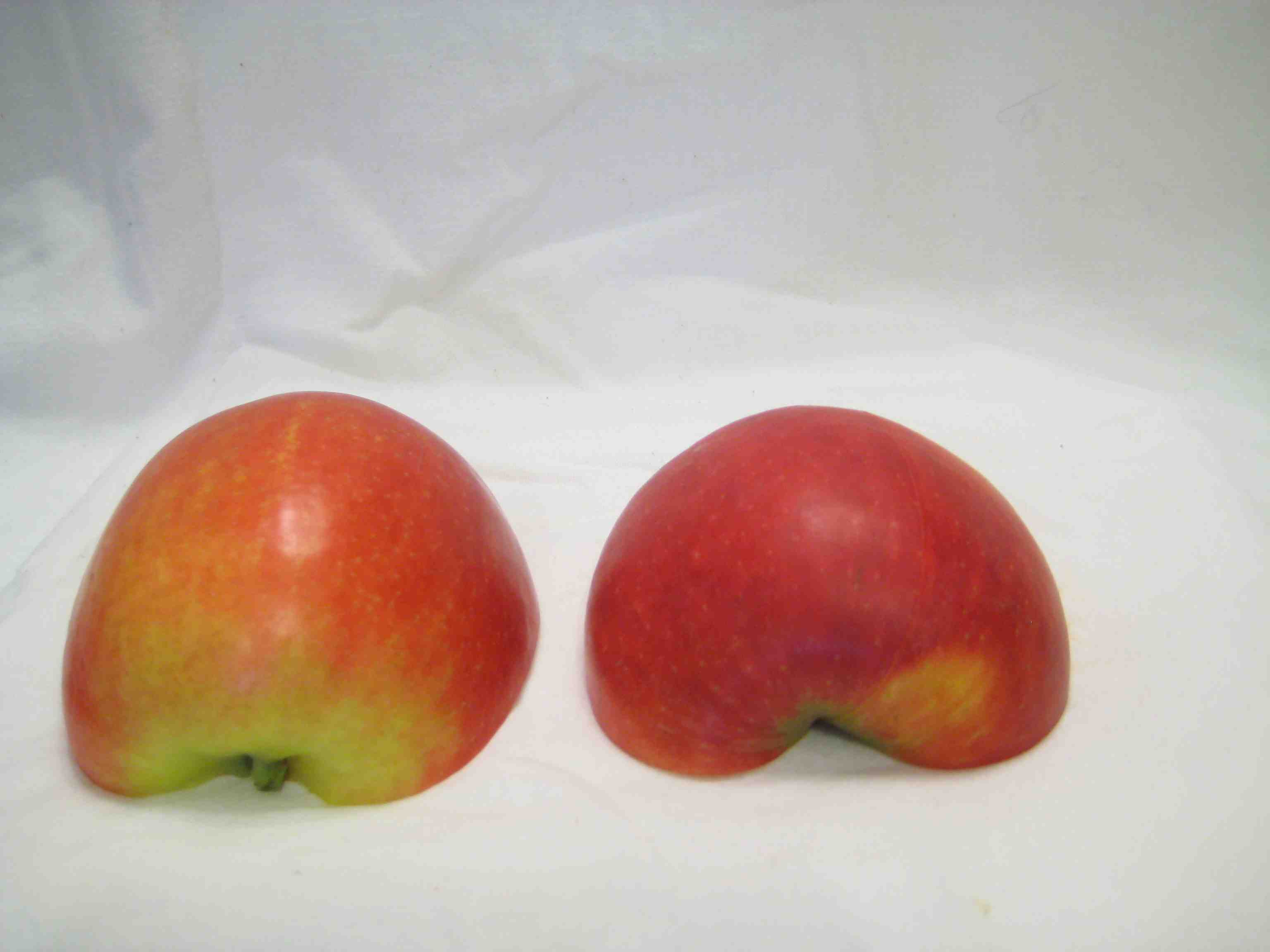}\quad
\includegraphics[width=0.3\columnwidth]{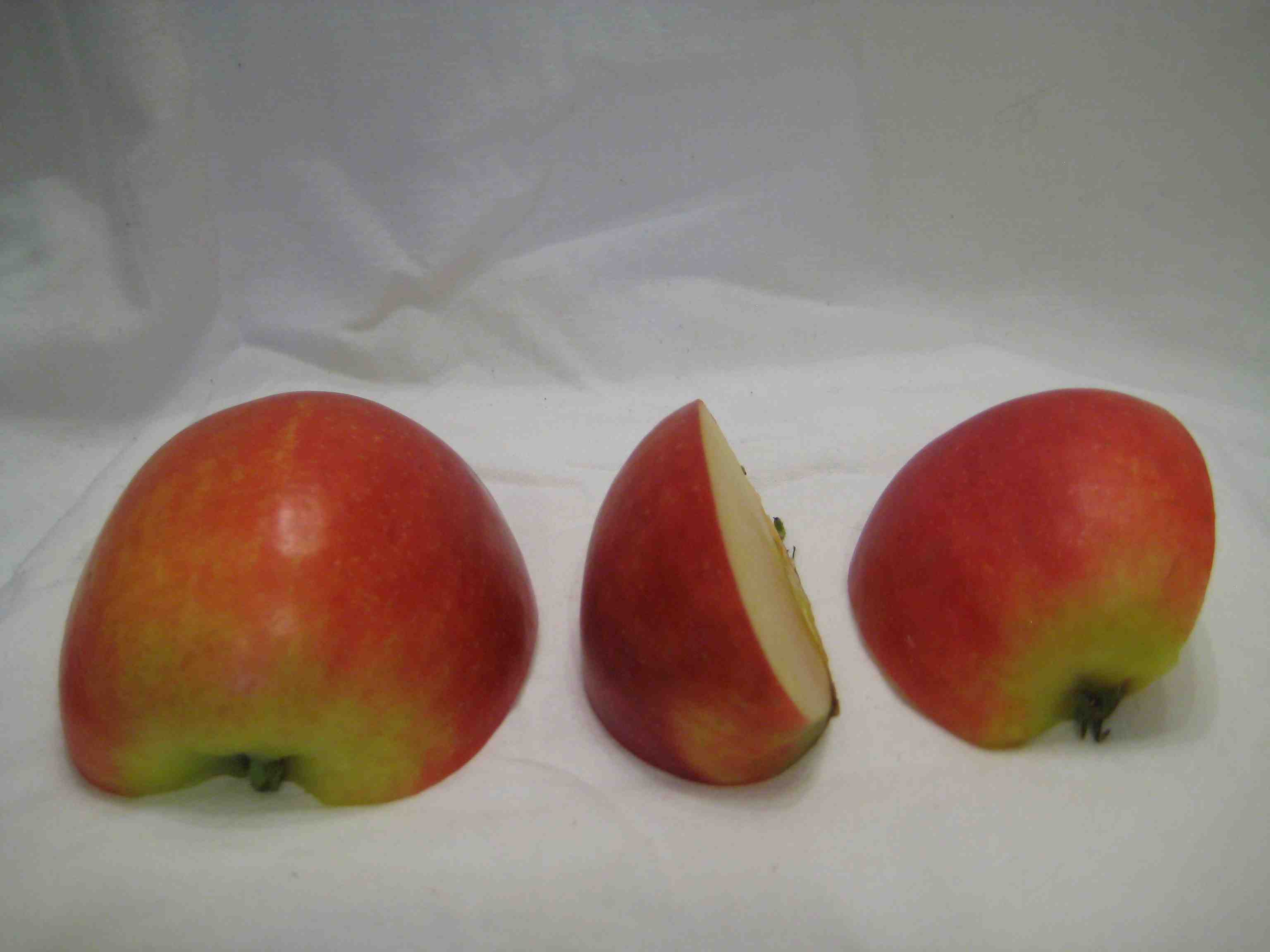}\\
\vspace{0.38cm}
\includegraphics[width=0.3\columnwidth]{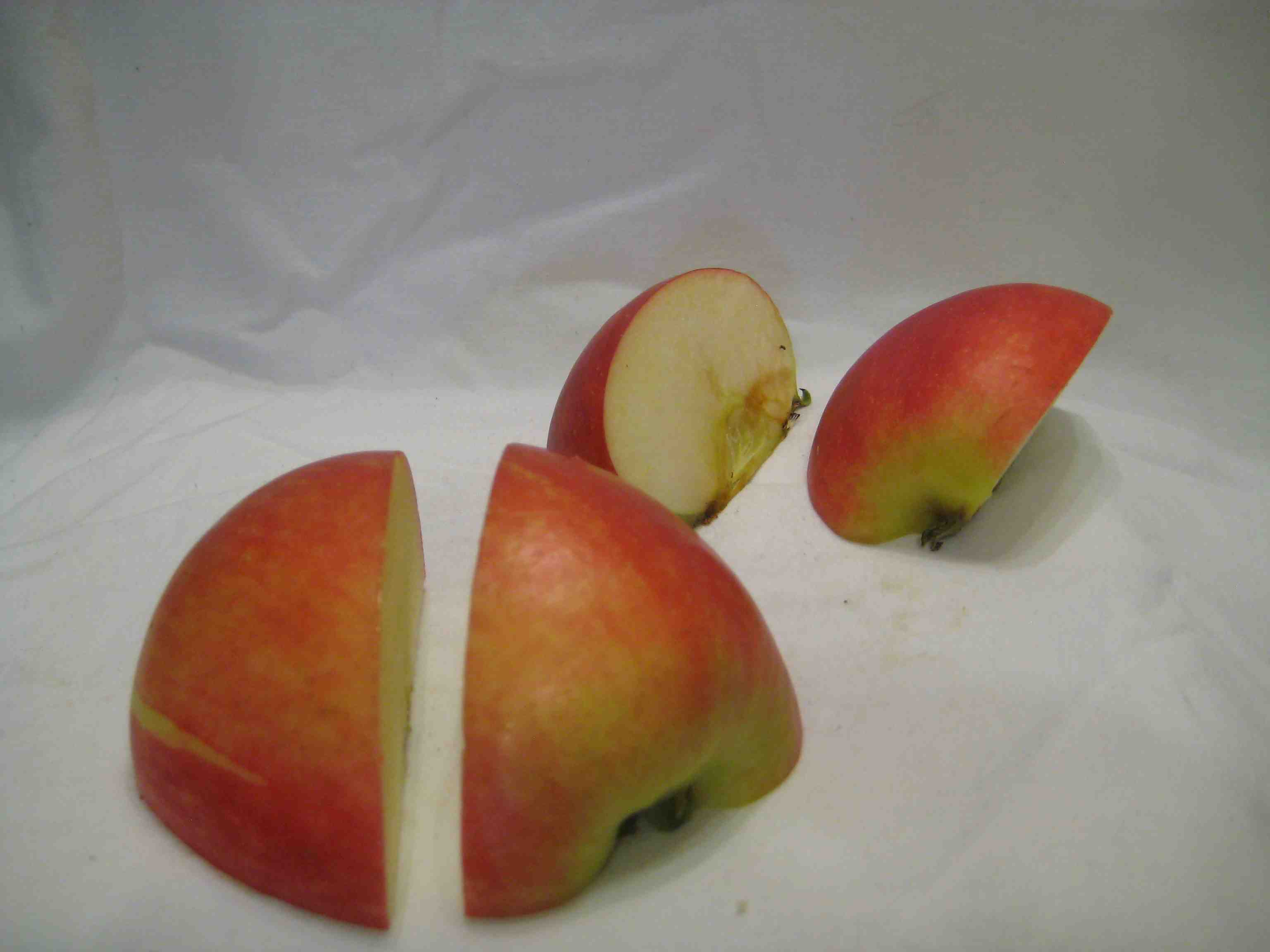}\quad
\includegraphics[width=0.3\columnwidth]{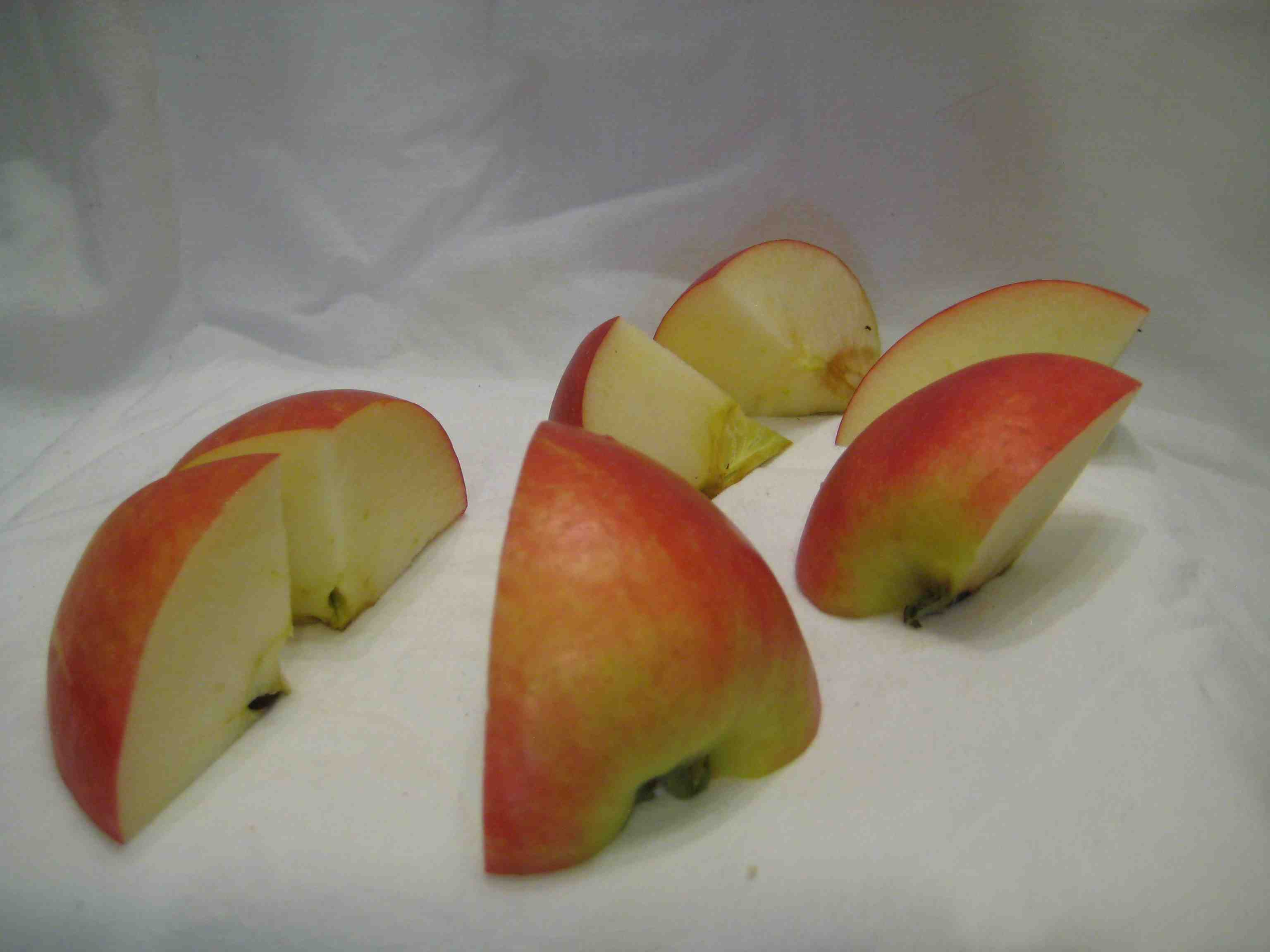}\quad
\includegraphics[width=0.3\columnwidth]{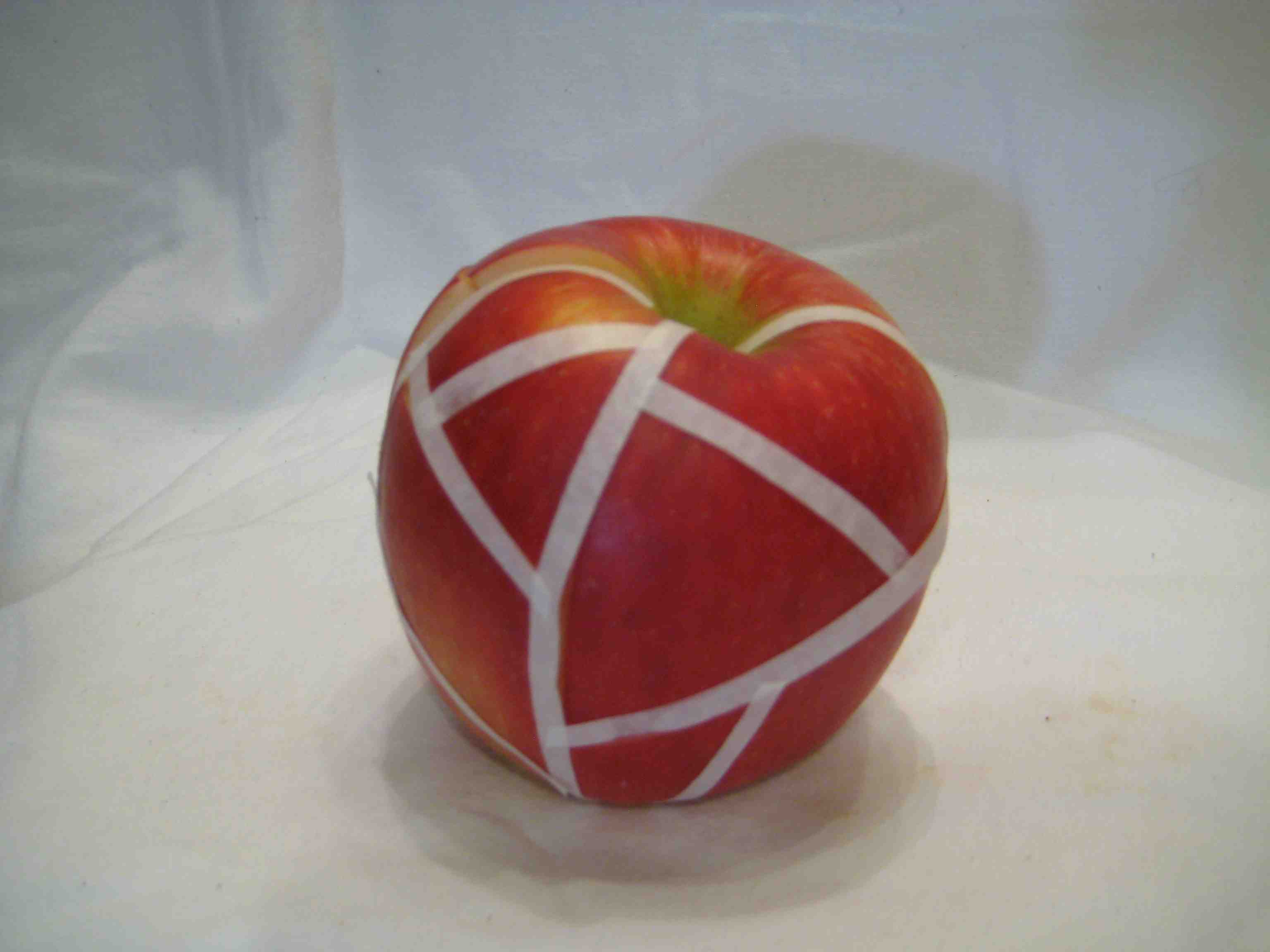}
\end{center}
\caption{Cutting of an apple.}
\label{fig:Apple}
\end{figure}

To model the situation just described, we identify the apple with the unit ball in $\RR^3$ and its surface with the $2$-dimensional unit sphere $\Sp^2$. The time we wait until an existing piece $c$ is cut is assumed to be random and exponentially distributed with parameter $\sigma_2([c])$. Here, $\sigma_2$ denotes the normalized spherical Lebesgue measure and $[c]:=\{u\in\Sp^2:c\cap u^\perp\neq\emptyset\}$. Finally, we assume that also the unit normal vector of the plane used to cut $c$ is random and has distribution $\sigma_2(\,\cdot\,\cap[c])/\sigma_2([c])$. The main purpose of this article is to study the probabilistic and geometric properties of the random tessellation process on $\Sp^2$ that is induced by the described cell splitting scheme, whose formal continuous time Markovian definition will be presented in Section \ref{subsec:Splitting} below (but see Figure \ref{fig:ST} and Figure \ref{fig:Process} for illustrations).

One motivation to study this model comes from Euclidean stochastic geometry, where instead of $\Sp^2$ a prescribed convex body $W\subset\RR^2$ in the plane is subdivided by random line segments. The resulting tessellation of $W$ is then extended to a stationary random tessellation in the whole plane by consistency and is known as a so-called STIT tessellation. After its introduction by Nagel and Wei\ss\ \cite{NagelWeiss2005} this model has attracted considerable interest because of the numerous analytically available results, see \cite{LachiezeRey,MeckeNagelWeiss2011,NagelWeiss2007,SchreiberThaelePMS,SchreiberThaeleBernoulli,SchreiberThaeleAOP,TWN,WeissNagelOhser}. It is our aim to study the spherical counterpart of planar STIT tessellations and the effects induced by the curvature of the underlying state space. While STIT tessellations in the plane are \underline{st}able under the operation of rescaled \underline{it}eration of tessellations, this is not the case for their spherical counterparts, since no such rescaling is possible on $\Sp^2$. For this reason, we decided to call them \textit{splitting tessellations}, as they arise as a result of the described splitting scheme.

While random tessellations of Euclidean spaces, especially of the Euclidean plane, are a classical topic in stochastic geometry (see Chapter 10 in \cite{SchneiderWeil} and the references given therein), random tessellations on the sphere have not found equal attention in the existing literature. However, random tessellations of $\Sp^2$ or higher-dimensional spherical spaces by great circles were studied in \cite{ArbeiterZaehle,MilesSphere,Santalo} and also the recent work \cite{HugSchneiderConical} on so-called conical random tessellation is closely related to this model. Spherical random Voronoi tessellations and their applications were investigated in \cite{MilesSphere,SugiharaVoronoiSphere,VoronoiSphereTanemura,VoronoiSphereAppl}. On the other hand, the splitting tessellations we study fall into a different class of models that have not been considered so far. They are by construction and in contrast to great circle or Voronoi tessellations not side-to-side, meaning that the intersection of two tessellation cells is not necessarily a common side of both cells. This gives rise to a couple of new geometric effects that are exploited within the present paper as well and were another source of motivation for us to study splitting tessellations on $\Sp^2$. We also take the opportunity to mention that the splitting tessellations we introduce and study can be regarded as a first attempt to stochastic-geometric models for crack structures on surfaces.

\begin{figure}[t]
\begin{center}
\includegraphics[trim = 45mm 167mm 50mm 15mm, clip, width=0.5\columnwidth]{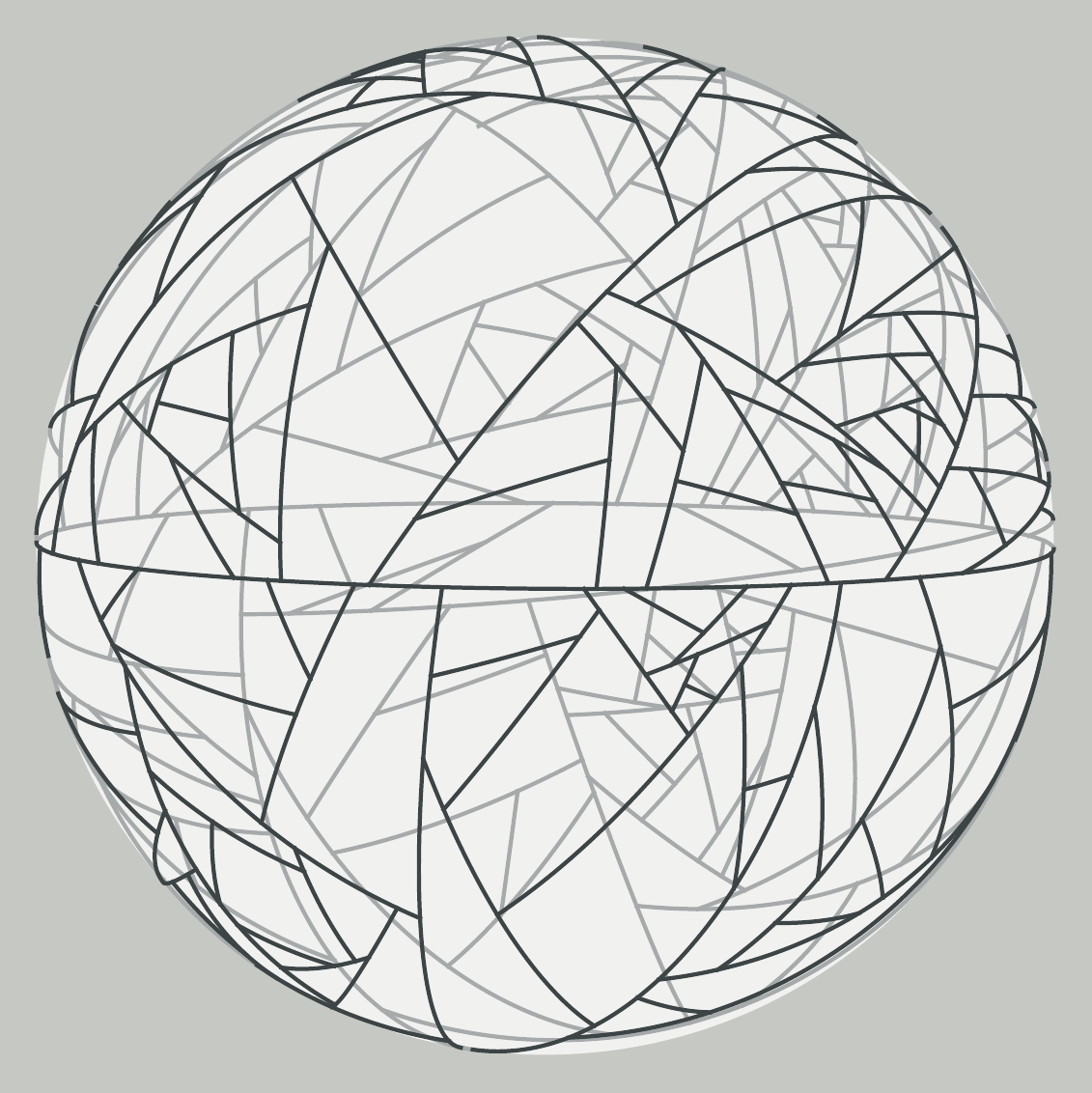}
\end{center}
\caption{Illustration of a splitting tessellation.}
\label{fig:ST}
\end{figure}

\medbreak

Let us briefly describe the findings of our paper. We first compute three total length parameters, namely the mean total edge length, the mean total side length of the cells as well as the mean length of all so-called maximal segments of a splitting tessellation. It turns out that, in contrast to the Euclidean case mentioned above, these three quantities do not coincide. We also compute the intensities of a number of point processes that are associated to geometric objects of splitting tessellations. We then determine a large array of mean adjacencies, including quantities like the mean number of vertices on the boundary of a `typical' cell or the mean number of vertices that are located on the `typical' cell-side. In the large time asymptotics, all these mean values behave like their Euclidean counterparts, while for comparably `small' time parameters the curvature of the sphere shows a significant influence. Beside the first-order behaviour of splitting tessellations, we also analyse some of their more sophisticated properties. We determine the capacity functional, which is the suitable substitute for random sets of the distribution function of a real-valued random variable. As an application we show that the point process that arises if a splitting tessellation is intersected by a fixed great circle is the concatenation of two independent Poisson point processes plus a pair of deterministic antipodal points. On the technical side, the proofs of our results make an extensive use of the martingale property of a family of stochastic processes that can be associated to the splitting tessellation construction. This is then combined with tools from spherical integral geometry as well as with combinatorial arguments. We restrict our attention to the $2$-dimensional unit sphere, although some of our results continue to hold in higher-dimensional spherical spaces as well. However, in dimension $2$ the most explicit results are available and we doubt, for example, that the adjacencies presented in Theorem \ref{thn:Adjacencies} below can be computed for general dimensions $d\geq2$.

\medbreak

Our paper is structured as follows. In Section \ref{subsec:SphericalGeom} we provide some background material related to spherical integral geometry. Our polygonal cell splitting scheme and the notion of a splitting tessellation are formally introduced in Section \ref{subsec:Splitting}, while in Section \ref{subsec:Martingales} we construct a family of associated martingales. Different set classes as well as the concept of Palm distributions are reviewd in Section \ref{subsec:SetClasses}. The main results of this paper are presented in Section \ref{sec:Results} and the final Section \ref{sec:Proofs} contains their proofs.

\section{Preliminaries}\label{sec:Preliminaries}

\subsection{Spherical integral geometry}\label{subsec:SphericalGeom}

By $\Sp^2$ we denote the Euclidean unit sphere, which inherits a Riemannian structure from the ambient space $\RR^3$. The geodesic distance on $\Sp^2$ is denoted by $d_s(\,\cdot\,,\,\cdot\,)$. The collection of great circles on $\Sp^2$, that is, intersections of $\Sp^2$ with planes in $\RR^3$ through the origin, is $\sG(\Sp^2)$. This space carries an invariant probability measure $\tau$ given by the relation
$$
\int_{\sG(\Sp^2)}F(g)\,\tau(\dint\! g) = \int_{\Sp^2}F(\Sp^2\cap u^\perp)\,\sigma_2(\dint\! u)\,,
$$
where $F:\Sp^2\to\RR$ is a non-negative measurable function and $\sigma_2$ stands for the normalized spherical Lebesgue measure, i.e., $\sigma_2(\Sp^2)=1$. Moreover, by $\sigma_1$ we denote the spherical length measure which is normalized in such a way that $\sigma_1(g)=1$ for all great circles $g\in\sG(\Sp^2)$.

Each great circle $g\in\sG(\Sp^2)$ separates $\Sp^2$ into two hemispheres. By a spherically convex polygon we understand the intersection of finitely many closed hemispheres and we denote by $\sP(\Sp^2)$ the collection of all such polygons. For technical reasons, we will also include the whole sphere $\Sp^2$ as a degenerate case in $\sP(\Sp^2)$. We supply the space $\sP(\Sp^2)$ with the Borel $\sigma$-field $\cB(\sP(\Sp^2))$ induced by the spherical Hausdorff distance. The interior and the boundary of a spherically convex polygon $p\in\sP(\Sp^2)$ are denoted by ${\rm int}(p)$ and ${\rm bd}(p)$, respectively, in what follows. If $p\in\sP(\Sp^2)$ is a spherically convex polygon with ${\rm int}(p)\neq\emptyset$ and if $p$ has representation $p=S_1\cap\ldots\cap S_n$ with closed hemispheres $S_1,\ldots,S_n$ that are bounded by great circles $g_1,\ldots,g_n\in\sG(\Sp^2)$, we call $p\cap g_m$ a side and $p\cap g_m\cap g_\ell$ a corner of $p$, $m,\ell\in\{1,\ldots,n\}$, provided that the intersections are non-empty.

Next, we recall from Chapter 6.5 in \cite{SchneiderWeil} the definition of the spherical intrinsic volumes $v_0,v_1$ and $v_2$ on $\sP(\Sp^2)$. They are defined as coefficients of the spherical Steiner formula. It says that for $p\in\sP(\Sp^2)$ with $p\neq\Sp^2$ and $\varepsilon\in(0,\pi/2)$,
$$
\sigma_2(\{x\in\Sp^2:d_s(x,p)\le \varepsilon\})=(1-\cos (\varepsilon))\,v_0(p)+\sin (\varepsilon)\, v_1(p)+v_2(p)\,,
$$
where $d_s(x,p)=\min\{d_s(x,y):y\in p\}$, see \cite[Theorem 6.5.1]{SchneiderWeil}. In particular, $v_2(p)=\sigma_2(p)$ is the normalized spherical Lebesgue measure of $p$ and we also have the relations $\tau([p]) = \sigma_1(\partial p)$ and $\sigma_1(s)=v_1(s)$ for a spherically convex polygon $p$ with ${\rm int}(p)\neq\emptyset$ that contains no great circle and a spherical line segment $s$. For convenience, we also put $v_2(\Sp^2):=1$ and $v_1(\Sp^2)=v_0(\Sp^2):=0$. The so defined functionals are invariant under rotations (and reflections) of $\Sp^2$, they are continuous with respect to the spherical Hausdorff distance and they are additive in the sense that $v_i(p\cup q)-v_i(p\cap q)=v_i(p)+v_i(q)$ for all $p,q\in\sP(\Sp^2)$ for which $p\cup q\in\sP(\Sp^2)$ and $i\in\{0,1,2\}$. In fact, the spherical intrinsic volumes $v_0,v_1,v_2$ form a basis of the vector space of all such functionals on $\sP(\Sp^2)$, cf.\ \cite[Theorem 6.5.4]{SchneiderWeil} and \cite[Theorem 11.3.1]{KlainRota}. This Hadwiger-type theorem can be used to give a short proof of the following result, which is the spherical counterpart of the well-known Crofton formula.

\begin{lemma}[Spherical Crofton formula; \cite{SchneiderWeil}, page 261]\label{lem:Crofton}
Let $p\in\sP(\Sp^2)$. Then,
\begin{align*}
\int_{\sG(\Sp^2)}v_0(p\cap g)\,\tau(\dint\! g)=v_1(p)\qquad\text{and}\qquad\int_{\sG(\Sp^2)}v_1(p\cap g)\,\tau(\dint\! g)=v_2(p)\,.
\end{align*}
\end{lemma}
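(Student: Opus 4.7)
The plan is to invoke the Hadwiger-type classification of continuous, rotation-invariant valuations on $\sP(\Sp^2)$ that is recalled just before the lemma. Introduce the two functionals
$$
\varphi_i(p):=\int_{\sG(\Sp^2)}v_{i-1}(p\cap g)\,\tau(\dint\! g),\qquad i\in\{1,2\},\ p\in\sP(\Sp^2),
$$
and verify that each $\varphi_i$ is itself a continuous, rotation-invariant valuation on $\sP(\Sp^2)$. The Hadwiger-type theorem then forces
$$
\varphi_i=c_0^{(i)}v_0+c_1^{(i)}v_1+c_2^{(i)}v_2
$$
with real constants $c_0^{(i)},c_1^{(i)},c_2^{(i)}$, which I would determine by plugging three well-chosen test polygons into this identity.

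The three structural properties of $\varphi_i$ should go smoothly. Rotation invariance is inherited from the rotation invariance of $\tau$ and of the spherical intrinsic volumes. For continuity, if $p_n\to p$ in the spherical Hausdorff metric then for $\tau$-almost every $g$ one has $p_n\cap g\to p\cap g$ in the same metric; combined with the continuity of $v_{i-1}$ and the uniform bound $v_{i-1}(p_n\cap g)\le v_{i-1}(g)<\infty$, dominated convergence gives $\varphi_i(p_n)\to\varphi_i(p)$. The valuation identity follows from $(p\cup q)\cap g=(p\cap g)\cup(q\cap g)$, $(p\cap q)\cap g=(p\cap g)\cap(q\cap g)$ and the already stated additivity of the spherical intrinsic volumes, applied inside the great circle $g$, together with an integration over $g$.

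To pin down the six coefficients I would use three test polygons and the spherical Steiner formula. For $p=\Sp^2$ the intersection $p\cap g=g$ is a great circle for every $g$, so $\varphi_1(\Sp^2)=\int v_0(g)\,\tau(\dint\! g)=0$ and $\varphi_2(\Sp^2)=\int v_1(g)\,\tau(\dint\! g)=1$; combined with the convention $v_0(\Sp^2)=v_1(\Sp^2)=0$ and $v_2(\Sp^2)=1$ this yields $c_2^{(1)}=0$ and $c_2^{(2)}=1$. For $p=\{x\}$ a single point, $\tau$-almost every great circle misses $p$, so $\varphi_i(\{x\})=0$; the Steiner formula applied to a spherical cap of radius $\varepsilon$ gives $v_0(\{x\})=\tfrac12$ and $v_1(\{x\})=v_2(\{x\})=0$, forcing $c_0^{(i)}=0$. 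Finally, for $p=H$ a closed hemisphere, $H\cap g$ is a half great circle for $\tau$-almost every $g$, so $\varphi_1(H)=\tfrac12$ and $\varphi_2(H)=\tfrac12$; the Steiner formula applied to the $\varepsilon$-neighborhood of $H$ (a hemisphere enlarged by a half-strip of width $\varepsilon$) yields $v_0(H)=0$, $v_1(H)=\tfrac12$, $v_2(H)=\tfrac12$, which fixes $c_1^{(1)}=1$ and $c_1^{(2)}=0$. Thus $\varphi_1=v_1$ and $\varphi_2=v_2$, as claimed.

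The only genuinely delicate point I anticipate is the valuation step: the slices $p\cap g$ are degenerate members of $\sP(\Sp^2)$ living inside a one-dimensional great circle, so one has to be careful that the additivity of $v_{i-1}$ applies to them (or, alternatively, approximate by thickenings in $\Sp^2$ and pass to the limit using continuity of $v_{i-1}$). Once this technicality is resolved, the remaining arguments are standard Hadwiger-type bookkeeping.
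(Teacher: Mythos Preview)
Your proposal is correct and follows essentially the same template as the paper: apply the spherical Hadwiger theorem to the functional $p\mapsto\int_{\sG(\Sp^2)}v_{i-1}(p\cap g)\,\tau(\dint g)$ and determine the three coefficients by evaluating at test polygons. The only cosmetic difference is your choice of test bodies: the paper uses a point, a great circle, and $\Sp^2$ (and only writes out the first identity, leaving the second to the reader), whereas you use a point, a closed hemisphere, and $\Sp^2$ and treat both identities explicitly; either triple pins down the coefficients.
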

\begin{proof}
Define a functional $\varphi$ on $\sP(\Sp^2)$ by 
$$
\varphi(p):=\int_{\sG(\Sp^2)}v_0(p\cap g)\,\tau(\dint\! g)
$$
and notice that $\varphi$ is invariant under rotations, continuous with respect to the spherical Hausdorff distance and additive in the sense explained above. Thus, by Theorem 6.5.4 in \cite{SchneiderWeil} or Theorem 11.3.1 in \cite{KlainRota}, there are constants $c_0,c_1,c_2\in\RR$ such that
$$
\varphi(p)=c_0v_0(p)+c_1v_1(p)+c_2v_2(p)
$$
for all $p\in\sP(\Sp^2)$. To determine the values of these constants, we first plug in for $p$ a single point $x\in\Sp^2$. Since $\varphi(\{x\})=0$ and $v_1(\{x\})=v_2(\{x\})=0$ we have that $c_0=0$. Next, we plug in for $p$ a great circle $h\in\sG(\Sp^2)$. By \cite[Equation (6.5.1)]{SchneiderWeil} one has that $\varphi(h)=1$ and $v_2(h)=0$ and thus $c_1=1$. Finally, we choose $p$ as the sphere $\Sp^2$ and notice that $\varphi(\Sp^2)=0$, since $v_0(\Sp^2\cap g)=v_0(g)=0$ for any $g\in\sG(\Sp^2)$, see \cite[Equation (6.53)]{SchneiderWeil}. This implies $c_2=0$ and completes the proof of the first formula. The second one follows by a similar argument and for this reason we skip the details.
\end{proof}

For further results and background material on spherical integral geometry (also in higher-dimensional spherical spaces) we refer to Chapter 11 in \cite{KlainRota}, to Chapter 6.5 in \cite{SchneiderWeil} and also to the monograph \cite{Santalo}.

\subsection{Tessellations and splitting tessellations}\label{subsec:Splitting}

Recall that $\sP(\Sp^2)$ stands for the space of all spherically convex polygons.

\begin{definition}
A tessellation $T$ of $\Sp^2$ is a finite subset of $\sP(\Sp^2)$ with the following properties:
\begin{itemize}
\item[(i)] ${\rm int}(p)\cap{\rm int}(p')=\emptyset$ for all $p,p'\in T$ with $p\neq p'$,
\item[(ii)] $\bigcup_{p\in T}p=\Sp^2$.
\end{itemize}
The elements of $T$ are called the cells of the tessellation $T$.
\end{definition}

In our paper we will identify a tessellation $T$ with the closed set
$$
\bigcup_{p\in T}{\rm bd}(p)\subset\Sp^2
$$
and write $\bZ(T)$ for the set of cells of $T$. Then the space $\sT(\Sp^2)$ of tessellations of $\Sp^2$ can be regarded as a subspace of the space of closed subsets of $\Sp^2$ and in this way, $\sT(\Sp^2)$ can be equipped with a Borel $\sigma$-field $\cB(\sT(\Sp^2))$ that is induced by the trace of the usual Fell topology on the space of closed subsets of $\Sp^2$, see \cite{SchneiderWeil}. A spherical random tessellation can now be defined as a measurable mapping from some probability space $(\Omega,\cA,\PP)$ taking values in the measurable space $(\sT(\Sp^2),\cB(\sT(\Sp^2)))$.

For a Borel set $B\in\cB(\Sp^2)$ we introduce the notation $[B]:=\{g\in\sG(\Sp^2):B\cap g\neq \emptyset\}$. Now, we define for a tessellation $T\in\sT(\Sp^2)$, a cell $p\in\bZ(T)$ and a great circle $g\in[p]$ a new tessellation by
\begin{align*}
\oslash_{p,g}(T):=\bigcup_{p'\in((\bZ(T)\setminus p)\cup\{p\cap g^+,p\cap g^-\})}{\rm bd}(p')\,,
\end{align*}
where $g^{\pm}$ are the two closed hemispheres into which $\Sp^2$ is separated by $g$. In other words, $\oslash_{p,g}(T)$ is the tessellation that arises from $T$ if the cell $p$ is split by the great circle $g$.

To proceed, let us dissect $\Sp^2$ into the two hemispheres $\Sp^2_\pm$ determined by the equator, which is denoted by $A$ in what follows, i.e., $A=\Sp^2\cap\{(x,y,z)\in\RR^3:z=0\}$, $\Sp^2_+:=\Sp^2\cap\{(x,y,z)\in\RR^3:z\geq 0\}$ and $\Sp^2_-:=\Sp^2\cap\{(x,y,z)\in\RR^3:z\leq 0\}$. We are now prepared to formally introduce splitting tessellations and the splitting tessellation process on $\Sp^2$.

\begin{definition}
By the splitting tessellation process $(Y_t)_{t\geq 0}$ with initial tessellation $Y_0:=A$ we understand the continuous time Markov process on $\sT(\Sp^2)$ whose generator is given by
\begin{align}\label{SphercialTessellationGenerator}
\mathbb{L} f(T) := \sum_{p\in\bZ(T)}\int_{[p]} \big[\,f(\oslash_{p,g}(T))-f(T)\,\big]\,\tau(\dint\!g)\,,
\end{align}
where $T\in\sT(\Sp^2)$ and $f:\sT(\Sp^2)\to\RR$ is bounded and measurable. By a splitting tessellation with time parameter $t\geq 0$ we mean the spherical random tessellation $Y_t$.
\end{definition}

\begin{figure}[t]
\includegraphics[trim = 45mm 175mm 50mm 15mm, clip, width=0.3\columnwidth]{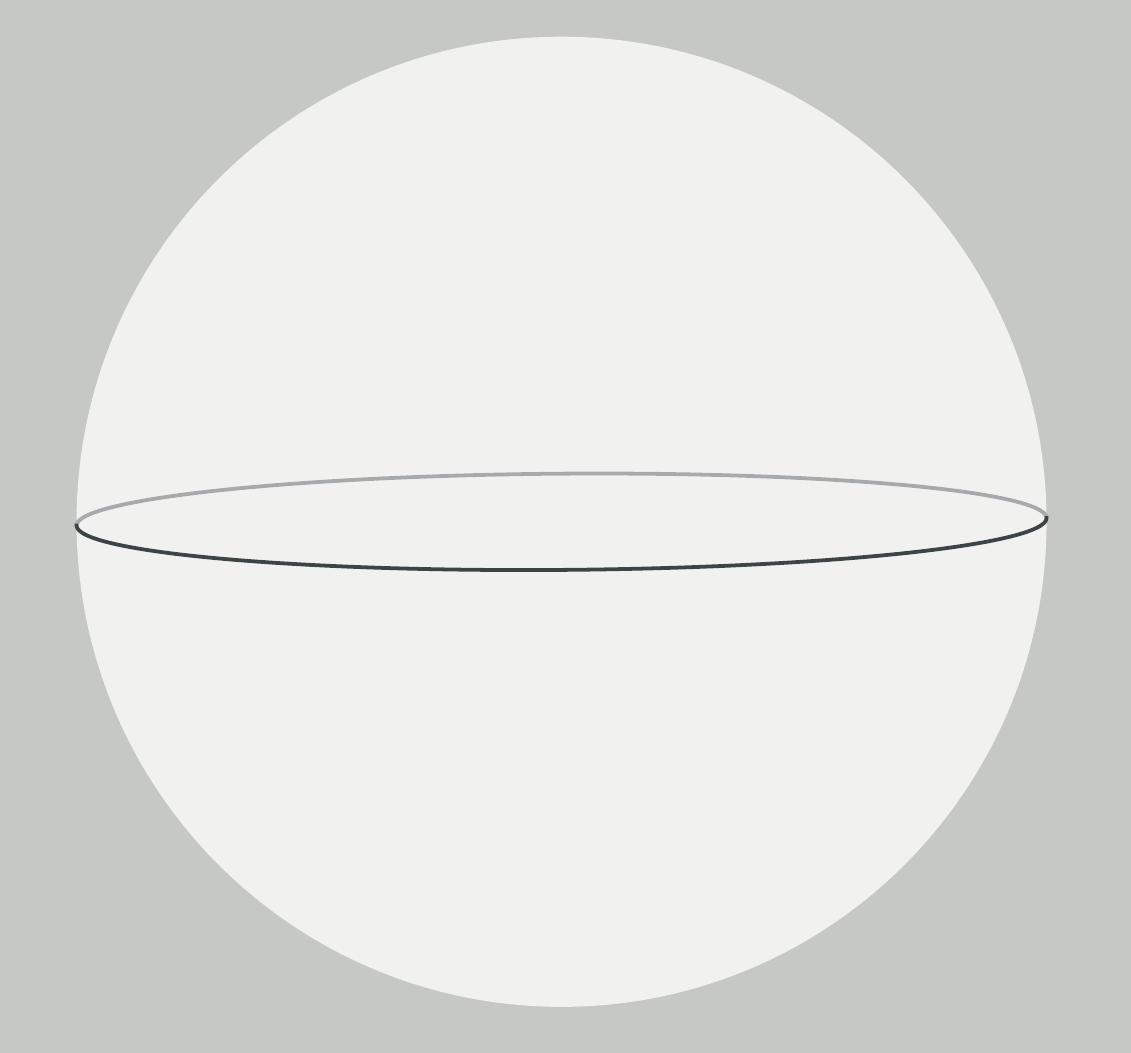}
\includegraphics[trim = 45mm 175mm 50mm 15mm, clip, width=0.3\columnwidth]{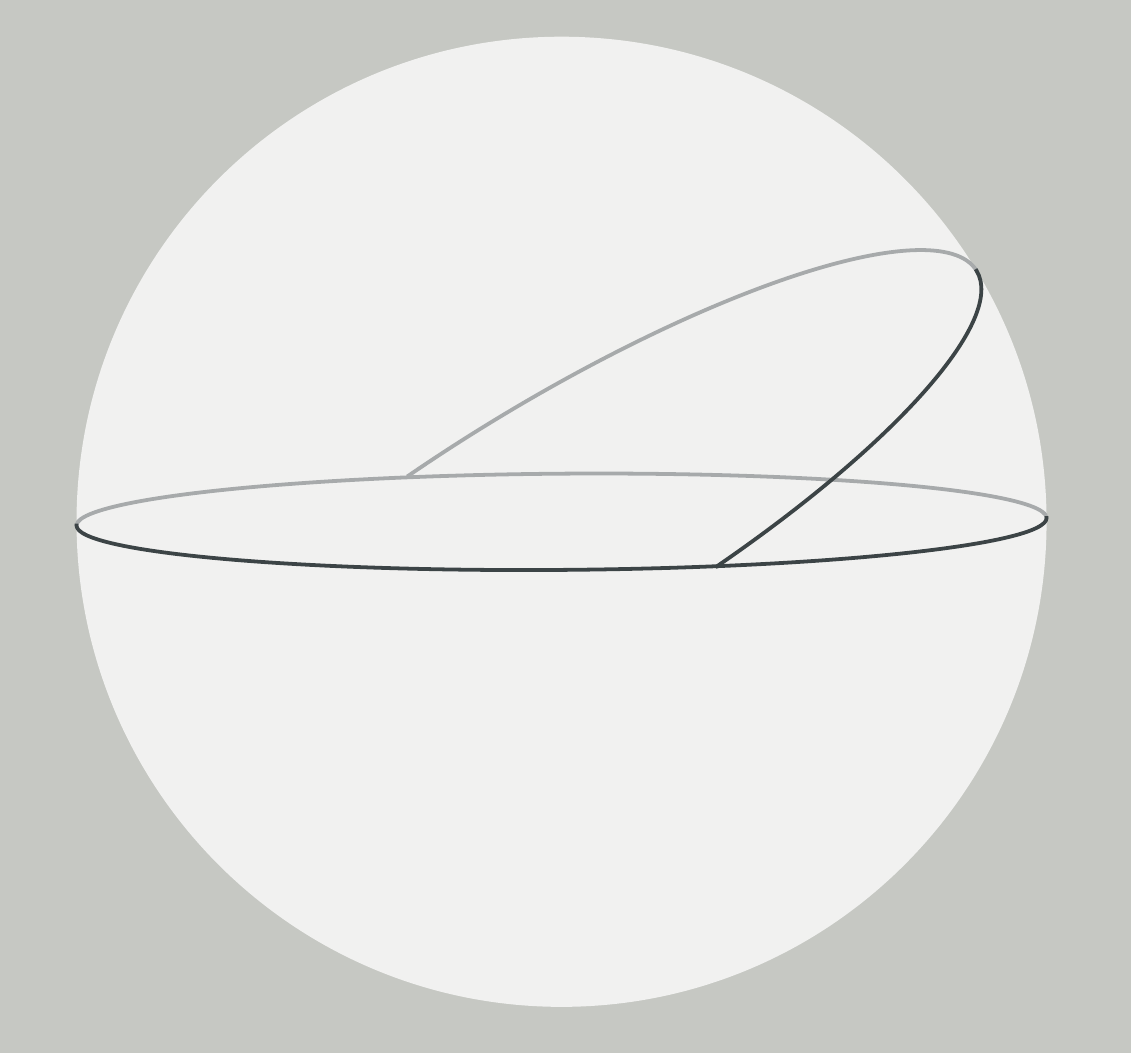}
\includegraphics[trim = 45mm 175mm 50mm 15mm, clip, width=0.3\columnwidth]{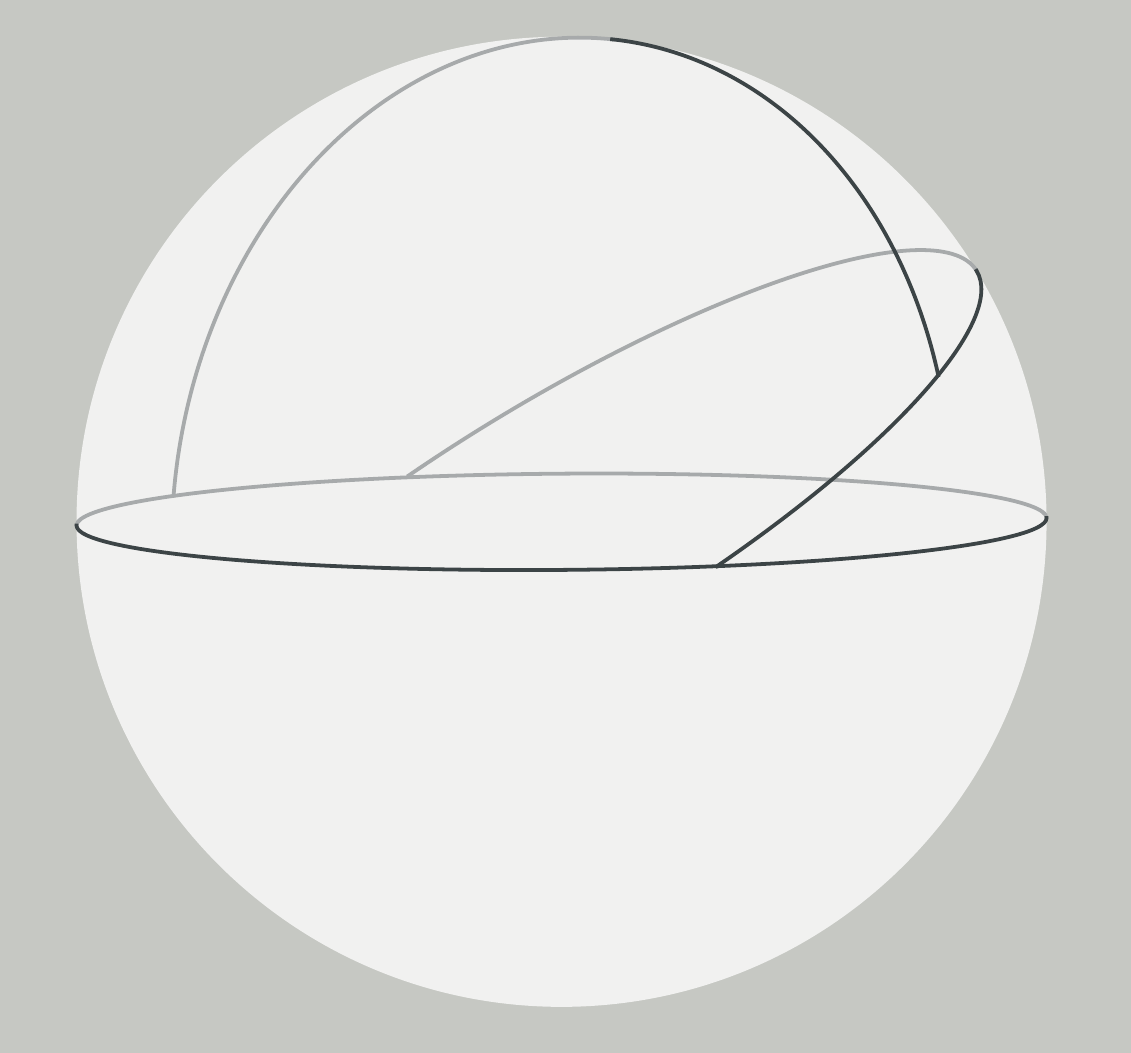}\\
\vspace{0.1cm}
\includegraphics[trim = 45mm 175mm 50mm 15mm, clip, width=0.3\columnwidth]{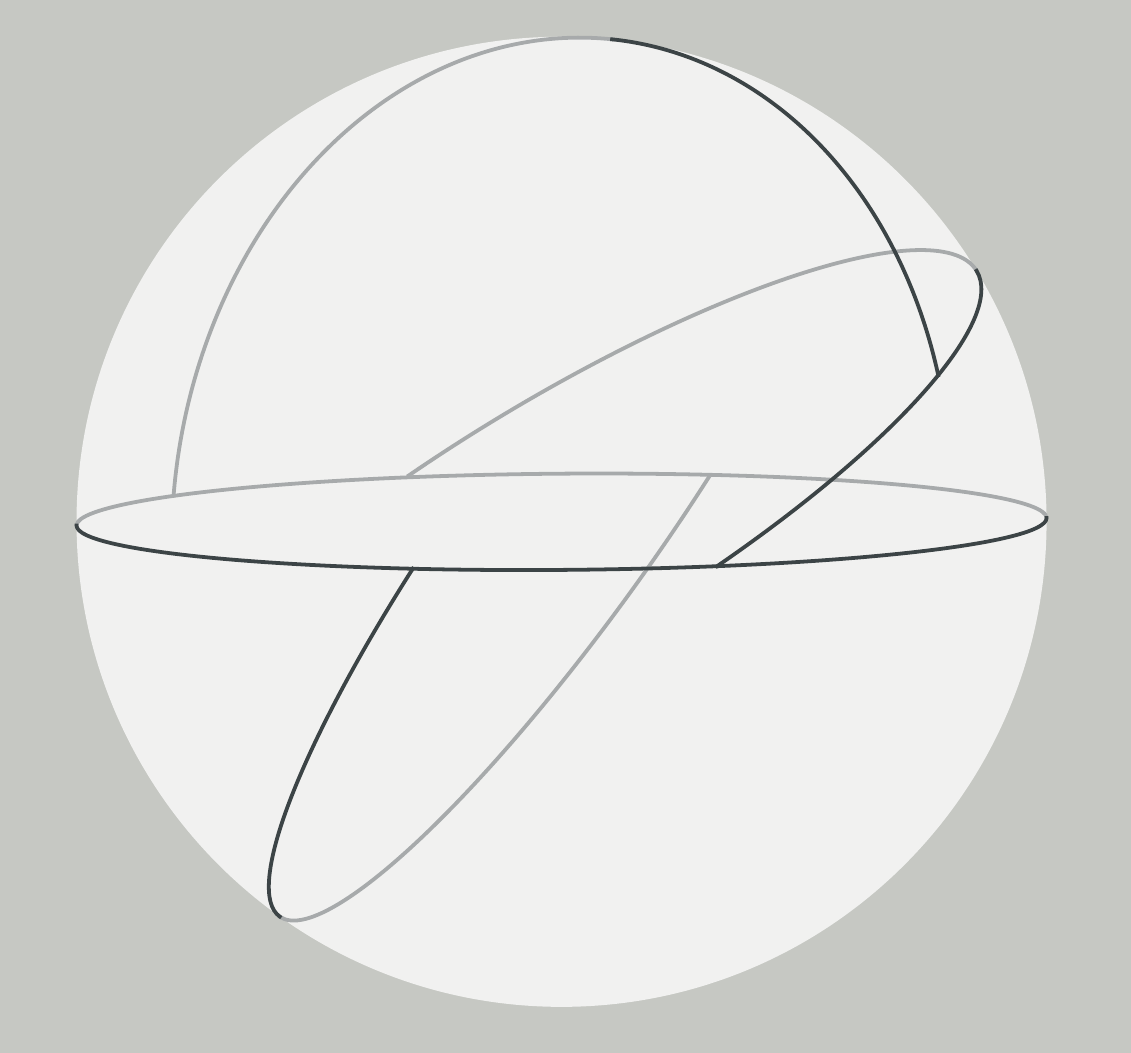}
\includegraphics[trim = 45mm 175mm 50mm 15mm, clip, width=0.3\columnwidth]{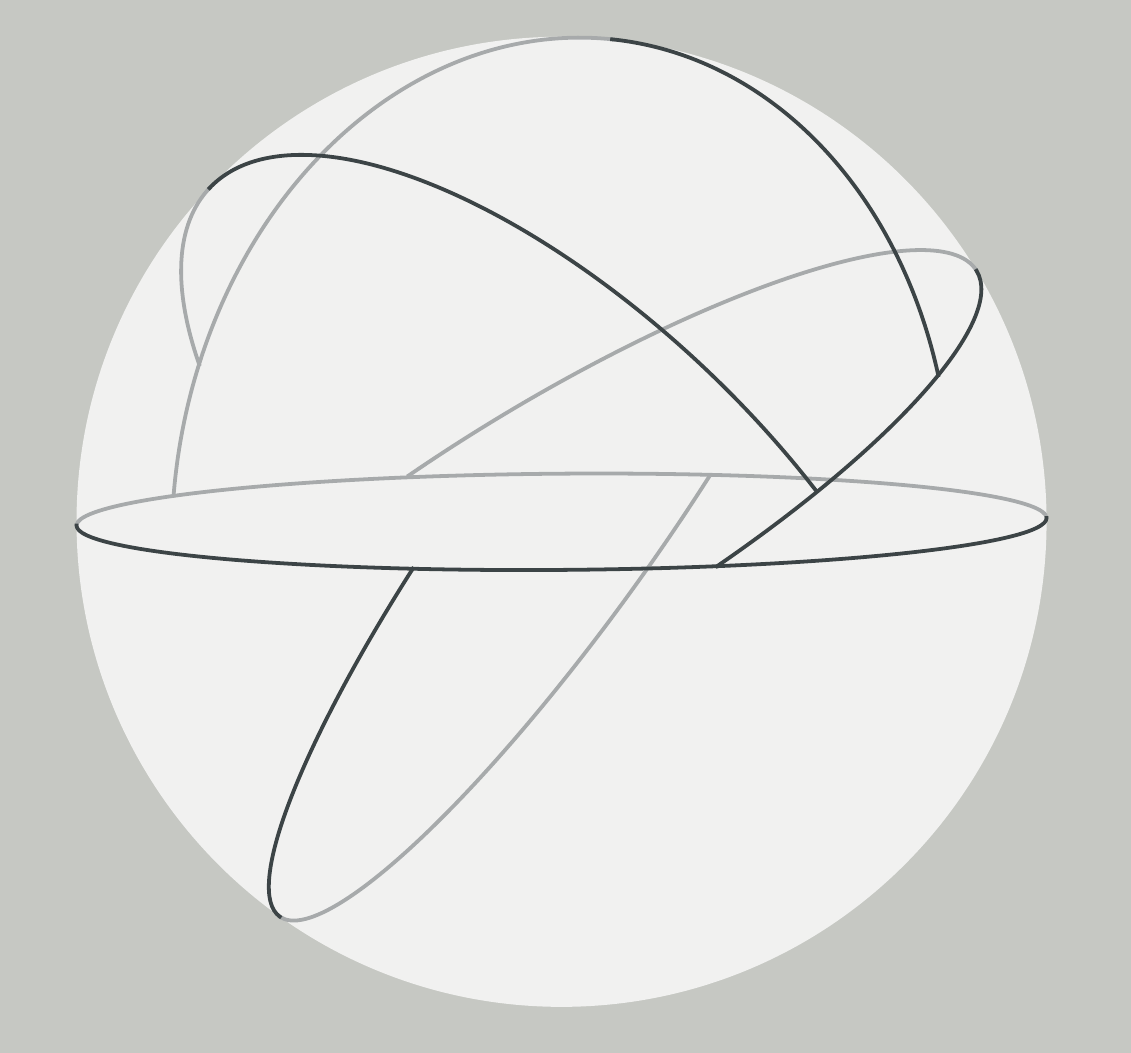}
\includegraphics[trim = 45mm 175mm 50mm 15mm, clip, width=0.3\columnwidth]{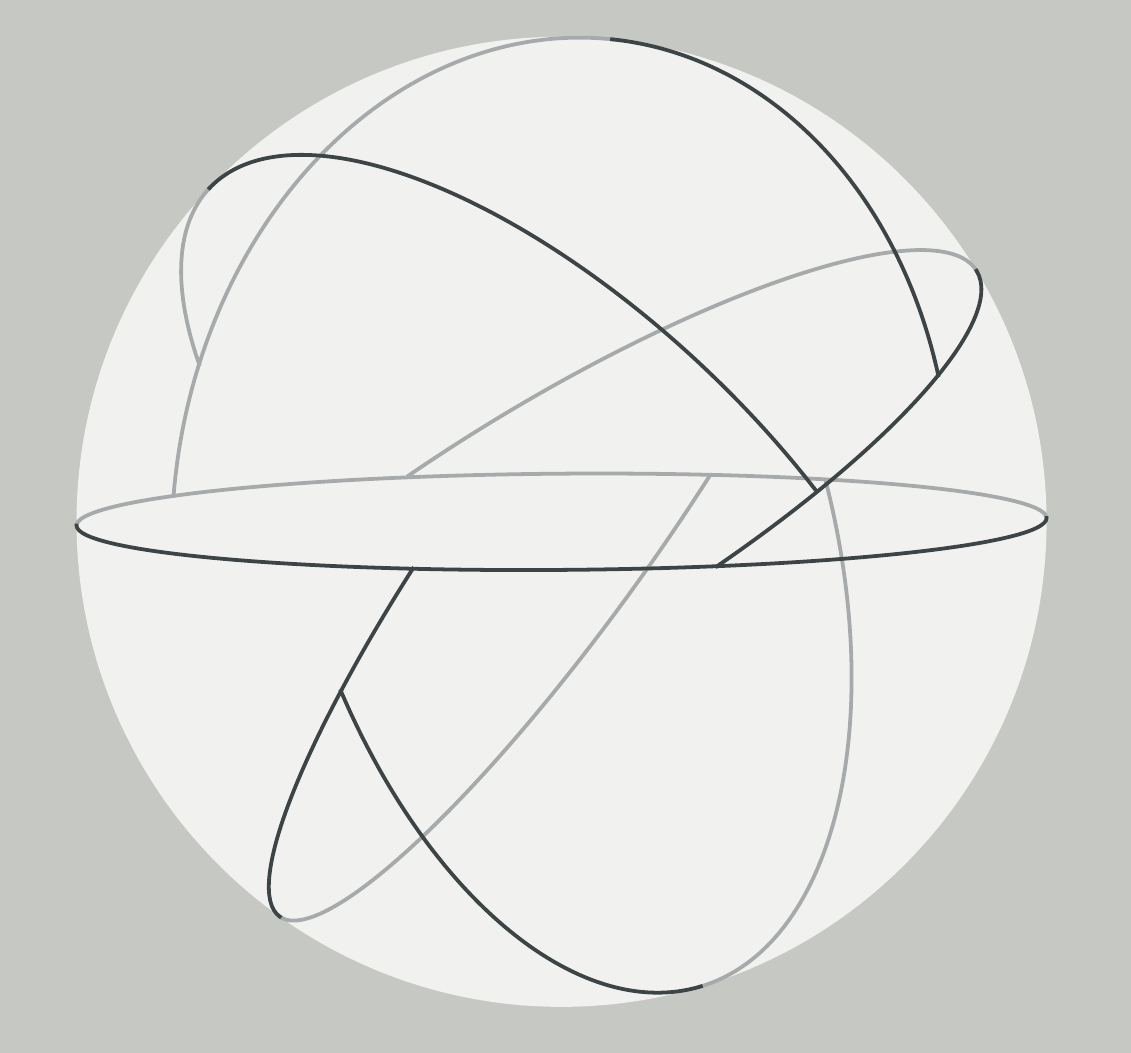}
\caption{Illustration of the splitting tessellation process on $\Sp^2$ at different time instants starting at $t=0$ and with the initial tessellation $Y_0=A$.}
\label{fig:Process}
\end{figure}

In spite of the choice of the initial tessellation $Y_0=A$ that consists of the two cells $\Sp^2_\pm$, the splitting tessellations $Y_t$ defined above are the clear spherical analogues of the STIT tessellations introduced in \cite{NagelWeiss2005} and considered in Euclidean stochastic geometry. Their dynamic can be described as in the introduction as a continuous-time branching process on $\sP(\Sp^2)$. That is, at time $t=0$ the tessellation consist of the equator $A$ and the two cells $\Sp_+^2$ and $\Sp_-^2$. Both have independent and exponentially distributed random lifetimes with parameter $1$. If one of the cells, say $\Sp_+^2$, dies out, a great circle $g\in[\Sp_+^2]=\sG(\Sp^2)$ is selected according to the distribution $\tau$ which splits $\Sp_+^2$ into the two sub-cells $\Sp^2_+\cap g^+$ and $\Sp^2_+\cap g^-$ that have $\Sp^2_+\cap g$ as a common side. Within these two sub-cells the construction starts anew with the lifetimes of $\Sp^2_+\cap g^+$ and $\Sp^2_+\cap g^-$ being independent and exponentially distributed with parameter $\tau([\Sp^2_+\cap g^+])$ and $\tau([\Sp^2_+\cap g^-])$, respectively (Figure \ref{fig:Process} illustrates the splitting process). In particular, we notice that once an existing cell $p$ is split by a great circle $g\in[p]$, the further splitting constructions within the two `daughter' cells $p\cap g^+$ and $p\cap g^-$ are (conditionally) independent and follow the same rules.

\medskip

Let us finally motivate our choice of the initial tessellation $Y_0=A$. Imagine we would have started with the empty initial tessellation that has $\Sp^2$ as its single cell. After some exponential random time has passed, $\Sp^2$ would have been split by a great circle $g\in\sG(\Sp^2)$. By rotation invariance, one can assume that $g=A$ and we notice that $A$ is the only great circle that is constructed during the splitting process. In other words this means that starting the splitting process with the initial tessellation $Y_0=A$ induces no loss of generality.

\subsection{A family of associated martingales}\label{subsec:Martingales}

In what follows we consider functionals on $\sT(\Sp^2)$ of the form
\begin{align*}
\Sigma_{\phi}(T):=\sum\limits_{p\in\bZ(T)} \phi (p)\,, \qquad T\in\sT(\Sp^2)\,,
\end{align*}
where $\phi:\sP(\Sp^2)\to\RR$ is bounded and measurable. The standard theory of Markov processes can now be used to construct a class of martingales that is associated with the splitting tessellation process $(Y_t)_{t\geq 0}$.

\begin{lemma}\label{lem:martingale}
Let $\phi$ be a bounded measurable function on $\sP(\Sp^2)$. Then the random process $(\Sigma_\phi(Y_t))_{t\geq 0}$ given by
\begin{align}\label{eq:Martingale}
\Sigma_{\phi}(Y_t)-\Sigma_{\phi}(Y_0)-\int_{0}^t\mathbb{L}\Sigma_\phi(Y_s)\,\mathrm{d}s\,,\qquad t\ge 0\,,
\end{align}
is a martingale with respect to the filtration induced by $(Y_t)_{t\geq 0}$.
\end{lemma}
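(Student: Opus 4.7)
The plan is to deduce this from the classical Dynkin formula for continuous-time pure-jump Markov processes: whenever the process is non-explosive and both $\Sigma_\phi(Y_t)$ and $\mathbb{L}\Sigma_\phi(Y_s)$ are suitably integrable, the compensated process in \eqref{eq:Martingale} is automatically a martingale in the natural filtration. The work therefore reduces to verifying these hypotheses, since the generator $\mathbb{L}$ is not literally bounded on $\sT(\Sp^2)$.

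The key preliminary step is to control the cell count $N_t := |\bZ(Y_t)|$. Since every transition splits one cell into two, $N_t$ is a pure-birth process whose total rate at state $T$ equals $\sum_{p\in\bZ(T)}\tau([p])\leq |\bZ(T)|$, because $\tau$ is a probability measure on $\sG(\Sp^2)$. Consequently $N_t$ is stochastically dominated by a Yule process of per-particle rate $1$ starting from $N_0=2$, so that $\EE[N_t]\leq 2\,e^{t}<\infty$ for every $t\geq 0$ and in particular $(Y_t)$ does not explode in finite time. Writing $M:=\sup_{p\in\sP(\Sp^2)}|\phi(p)|<\infty$, the crude estimates $|\Sigma_\phi(T)|\leq M\,|\bZ(T)|$ and, using $\tau([p])\leq 1$ together with the triangle inequality on the integrand defining $\mathbb{L}$, $|\mathbb{L}\Sigma_\phi(T)|\leq 3M\,|\bZ(T)|$, combine with the Yule bound to yield $\EE|\Sigma_\phi(Y_t)|<\infty$ and $\int_{0}^{t}\EE|\mathbb{L}\Sigma_\phi(Y_s)|\,\mathrm{d}s<\infty$ for every $t\ge 0$.

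With these bounds in hand, the martingale identity follows by a standard argument: I would condition on the first transition time of $(Y_t)$ after time $s$, use that the holding time is exponential with rate $\sum_{p\in\bZ(Y_s)}\tau([p])$ and that the transition kernel is the one described in Section~\ref{subsec:Splitting}, apply the strong Markov property, and then iterate over successive jumps. The integrability bounds above legitimate the interchange of expectation and the resulting infinite series, so that for $0\leq s\leq t$ one obtains
$$
\EE\!\left[\Sigma_\phi(Y_t)-\Sigma_\phi(Y_s)-\int_{s}^{t}\mathbb{L}\Sigma_\phi(Y_r)\,\mathrm{d}r\,\Big|\,\cF_s\right]=0,
$$
which is the desired martingale property. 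The only genuine obstacle is controlling the potentially unbounded growth of $N_t$; once the Yule domination is in place, every remaining step is routine Markov-process bookkeeping.
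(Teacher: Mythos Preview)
Your approach is correct and is the standard Dynkin-formula argument for pure-jump Markov processes; the paper itself does not give a self-contained proof but merely refers to Proposition~2 of \cite{SchreiberThaeleBernoulli}, where exactly this argument is carried out in the Euclidean setting. Your Yule-process domination of $N_t$ and the ensuing bounds $|\Sigma_\phi(T)|\le M\,|\bZ(T)|$, $|\mathbb{L}\Sigma_\phi(T)|\le 3M\,|\bZ(T)|$ are precisely what is needed to justify applying Dynkin's formula to the unbounded functional $\Sigma_\phi$, so nothing is missing.
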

\begin{proof}
The proof is the same as that of Proposition 2 in \cite{SchreiberThaeleBernoulli}.
\end{proof}

The martingale property of the random process $(\Sigma_\phi(Y_t))_{t\geq 0}$ will be one of the crucial tools used in our proofs.

\subsection{Set classes, Palm distributions and typical objects}\label{subsec:SetClasses}

We introduce different classes of sets of geometrical objects that are associated with a splitting tessellation. For a splitting tessellation $Y_t$ we define the following classes of primitive objects:
\begin{itemize}
\item[-] $\bZ(Y_t)$ the class of cells of $Y_t$,
\item[-] $\bE(Y_t)$ the class of edges of $Y_t$,
\item[-] $\bV(Y_t)$ the class of vertices of $Y_t$,
\end{itemize}
where by a vertex we mean the non-empty intersection of three different cells of $Y_t$ and an edge of $Y_t$ is a spherical line segment in the boundary of some cell of $Y_t$, which is bounded by two different vertices and has no vertex in its relative interior. We further introduce the notation $\partial\bZ(Y_t)$ to denote the class of cell boundaries of cells of $Y_t$. We also let
\begin{itemize}
\item[-] $\bS(Y_t)$ be the class of cell sides (i.e., $1$-dimensional faces) of $Y_t$ and
\item[-] $\bM(Y_t)$ be the class of maximal segments of $Y_t$.
\end{itemize}
Here, by a maximal segment we understand the maximal union of connected edges of $Y_t$ that are located on a common great circle and have different endpoints. In other words, the set of maximal segments of $Y_t$ is precisely the set of spherical line segments that are constructed until time $t$ (excluding thereby the equator $A$). We regard $\bS(Y_t)$ as a multi-set, meaning that for two cells $p_1,p_2\in\bZ(Y_t)$ that share a common side $s=p_1\cap p_2$, the side $s$ is contained twice in $\bS(Y_t)$. Note that $\bZ(Y_0)=\{\Sp^2_+,\Sp^2_-\}$ and $\bS(Y_0)=\{A,A\}$, while $\bE(Y_0)=\bV(Y_0)=\bM(Y_0)=\emptyset$.

From now on we shall write $\bZ,\partial\bZ,\bE,\bV,\bS$ and $\bM$ instead of $\bZ(Y_t),\partial\bZ(Y_t),\bE(Y_t),\bV(Y_t),\bS(Y_t)$ and $\bM(Y_t)$, respectively, and consider the time parameter $t\geq 0$ as being fixed. 

Let $\bX\in\{\bZ,\bE,\bV,\bS,\bM\}$ be one of the set classes introduced above and for $x\in\bX$ let $m(x)\in\Sp^2$ be a centre function such that $m(\vartheta x)=\vartheta m(x)$ for all rotations $\vartheta$ of $\Sp^2$. If $\bX=\bZ$ then $m(p)$ could be the centre of the smallest spherical cap containing $p\in\bZ$ or if $\bX=\bE$ we can choose $m(e)$ as the midpoint of $e\in\bE$, for example. This gives rise to the rotation invariant point processes $\xi_\bX:=\{m(x):x\in\bX\}$ on $\Sp^2$, whose intensities are denoted by $\lambda_\bX:=\EE|\xi_\bX|$, where $|\,\cdot\,|$ stands for the cardinality of the argument set (in the case $\bX=\bS$ we take into account multiplicities). We have that $\lambda_\bX<\infty$ for all time parameters $t\geq 0$. To introduce the typical object of class $\bX$ we use the concept of Palm distribution introduced in \cite{RotherZaehle}, see also \cite{LastRM}. To apply it, we first notice that $\Sp^2$ is a homogeneous space that can be identified with the quotient $SO(3)/SO(2)$, where the (unimodular) rotation group $SO(3)$ acts transitively on $\Sp^2$ and $SO(2)$ is interpreted as the stabilizer of the north pole $e:=(0,0,1)$. For $x\in\Sp^2$ we let $\Theta_x=\{\vartheta\in SO(3):\vartheta e=x\}$. We let $\nu_e$ be the invariant probability measure on $\Theta_e$ and let $\nu_x:=\nu_e\circ\vartheta^{-1}$ be the image measure for some arbitrary $\vartheta\in\Theta_x$ ($\nu_x$ does not depend on the choice of $\vartheta$). Following \cite{RotherZaehle} we can now define the Palm distribution $\PP_\bX^0$ of the splitting tessellation $Y_t$ with respect to $\bX$ as the probability measure on $(\sT(\Sp^2),\cB(\sT(\Sp^2)))$ given by
$$
\PP_\bX^0(A) := \lambda_\bX^{-1}\,\EE\sum_{x\in\xi_\bX}\int_{\Theta_x}{\bf 1}(\vartheta^{-1} Y_t\in A)\,\nu_x(\dint\!\vartheta)\,,\qquad A\in\cB(\sT(\Sp^2))\,.
$$
Under the Palm distribution $\PP_\bZ^0$ the north pole is the centre of a tessellation cell, which is called the typical cell of $Y_t$ in what follows. Similarly, the typical edge, the typical vertex, the typical side and the typical maximal segment of $Y_t$ are defined as the edge, vertex, side and maximal segment that have the north pole as centre when the splitting tessellation is regarded under the Palm distribution $\PP_\bE^0$, $\PP_\bV^0$, $\PP_\bS^0$ and $\PP_\bM^0$, respectively.

\section{Main results}\label{sec:Results}

\subsection{Mean values}

The purpose of this section is to compute a number of mean values that are associated with a splitting tessellation $Y_t$ for some fixed $t\geq 0$. For this, we only use combinatorial or geometric arguments as well as the martingale property established in Lemma \ref{lem:martingale}. We point out that this self-contained approach has been exploited only partially in the Euclidean case.

Our first result is concerned with the total spherical edge length $L_\bE$ and $L_\bM$ of all edges and all maximal segments, respectively, i.e.,
$$
L_\bE:=L_\bE(t) = 2\pi\sum_{e\in\bE(Y_t)}\sigma_1(e)\qquad\text{and}\qquad L_\bM:=L_\bM(t) = 2\pi\sum_{m\in\bM(Y_t)}\sigma_1(m)\,,
$$
where $\sigma_1$ denotes the normalized spherical length measure. Furthermore, we introduce the total side length of $Y_t$ as
$$
L_\bS:=L_\bS(t) = 2\pi\sum_{p\in\bZ(Y_t)}\sigma_1(\partial p)\,.
$$
The next result shows that in contrast to the Euclidean case the three quantities $L_\bE$, $L_\bM$ and $L_\bS/2$ do not coincide.

\begin{theorem}\label{thm:TotalLength}
Fix $t\geq 0$. Then,
\begin{align*}
L_\bE = 2\pi(1+t-e^{-2t})\,,\qquad L_\bM = 2\pi t\qquad\text{and}\qquad L_\bS=4\pi(t+1)\,.
\end{align*}
\end{theorem}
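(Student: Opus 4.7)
The plan is to read off $L_\bS$ and $L_\bM$ from the martingale in Lemma \ref{lem:martingale} by plugging in two carefully chosen bounded measurable functionals $\phi$ on $\sP(\Sp^2)$, and then to reduce $L_\bE$ to the previously computed $L_\bM$ plus a separate, easy-to-compute equatorial contribution. In both applications of the martingale the key mechanism is the spherical Crofton formula from Lemma \ref{lem:Crofton}, which collapses the generator into a constant.

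I would start with $L_\bS$ by choosing $\phi(p):=\sigma_1(\partial p)$, so that $\Sigma_\phi(Y_t)=L_\bS/(2\pi)$ and $\Sigma_\phi(Y_0)=2\sigma_1(A)=2$. When $p$ is split by $g$, the chord $p\cap g$ is added to the boundaries of both new cells while the remainder of $\partial p$ is merely partitioned, so $\phi(p\cap g^+)+\phi(p\cap g^-)-\phi(p)=2\sigma_1(p\cap g)=2v_1(p\cap g)$. Extending the $[p]$-integration to $\sG(\Sp^2)$ is harmless (since $v_1(p\cap g)=0$ off $[p]$), and Lemma \ref{lem:Crofton} then gives
\[
\mathbb{L}\Sigma_\phi(T)=2\sum_{p\in\bZ(T)}v_2(p)=2v_2(\Sp^2)=2,
\]
so $\EE\,\Sigma_\phi(Y_t)=2+2t$ and $L_\bS=4\pi(t+1)$.

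The quantity $L_\bM$ drops out of the same argument applied only to the chord portion of the boundary, namely with $\phi(p):=\sigma_1(\partial p)-\sigma_1(\partial p\cap A)$. Then $\Sigma_\phi(Y_0)=0$, and since each maximal segment carries exactly one cell on either side, $\Sigma_\phi(Y_t)=2L_\bM/(2\pi)$. Splits modify only the chord part of the boundary, so the preceding generator computation again yields $\mathbb{L}\Sigma_\phi\equiv 2$, whence $L_\bM=2\pi t$.

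For $L_\bE$ I would use the decomposition $L_\bE=L_\bM+L_{\bE,A}$ into chord edges and equatorial edges. An equatorial edge requires at least two vertices on $A$, and the very first split is automatically of one of the two hemispheres, placing both endpoints of its chord onto $A$; thereafter the number of equatorial vertices never decreases. Hence $L_{\bE,A}=2\pi\,\mathbf{1}\{\text{at least one split occurred by time }t\}$, and since the two initial cells die independently at rate $\tau([\Sp^2_\pm])=\sigma_1(A)=1$, the first split time is $\operatorname{Exp}(2)$. This gives $\EE L_{\bE,A}=2\pi(1-e^{-2t})$ and $L_\bE=2\pi(1+t-e^{-2t})$. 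The main obstacle, absent in the Euclidean STIT setting, is precisely this equatorial bookkeeping: one has to recognise that $L_\bE$ and $L_\bM$ differ exactly by the $2\pi(1-e^{-2t})$ of equator that becomes activated as edges at the moment the first split occurs.
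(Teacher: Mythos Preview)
Your proposal is correct and follows essentially the same approach as the paper. The paper applies Lemma~\ref{lem:martingale} with $\phi(p)=\sigma_1(\partial p)$, uses the Crofton formula to collapse the generator to the constant $2$, obtains $\EE\sum_{p}\sigma_1(\partial p)=2+2t$, and then reads off $L_\bM$, $L_\bE$ and $L_\bS$ via the identities $L_\bM=\pi\sum_p\sigma_1(\partial p)-2\pi$, $L_\bE=L_\bM+2\pi(1-e^{-2t})$ and $L_\bS=2(L_\bM+2\pi)$; your choice $\phi(p)=\sigma_1(\partial p)-\sigma_1(\partial p\cap A)$ for $L_\bM$ and your $\operatorname{Exp}(2)$ argument for the first split time are just minor repackagings of these same steps.
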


As above, we consider the time parameter $t\geq 0$ as being fixed and suppress the dependency on $t$ in our notation. So, write $\bX$ for one of the set classes $\bZ,\bE,\bV,\bS$ and $\bM$ introduced in Section \ref{subsec:SetClasses} and recall that by $\lambda_\bX$ we denote the mean number of objects of class $\bX$. 

\begin{theorem}\label{thm:Intensities}
Fix $t\geq 0$. Then $\lambda_\bX$ for $\bX\in\{\bZ,\bE,\bV,\bS,\bM\}$ is given as follows.
\begin{center}
\begin{tabular}{|c||c|c|c|c|c|}
\hline
\parbox[0pt][2em][c]{0cm}{}  & $\bX=\bZ$ & $\bX=\bE$ & $\bX=\bV$ & $\bX=\bS$ & $\bX=\bM$\\
\hline
\parbox[0pt][2em][c]{0cm}{} $\lambda_{\bX}$ & $t^2+2t+2$ & $3(t^2+2t)$ & $2(t^2+2t)$ & $2(2(t^2+2t)+e^{-t})$ & $t^2+2t$\\
\hline
\end{tabular}
\end{center}
\end{theorem}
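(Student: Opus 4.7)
The plan is to combine the martingale identity of Lemma~\ref{lem:martingale} with elementary combinatorial bookkeeping of how each splitting event modifies the five cardinalities at hand, feeding in as input the length formula for $L_\bS$ from Theorem~\ref{thm:TotalLength}.

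First I would compute $\lambda_\bZ$ by applying Lemma~\ref{lem:martingale} to $\phi\equiv1$, so that $\Sigma_\phi(T)=|\bZ(T)|$. Since splitting a cell $p$ by a great circle replaces it by two cells, a direct calculation gives
\[
\mathbb{L}\Sigma_\phi(T)=\sum_{p\in\bZ(T)}\tau([p])=\sum_{p\in\bZ(T)}\sigma_1(\partial p)=\frac{L_\bS(T)}{2\pi},
\]
where I use the identity $\tau([p])=\sigma_1(\partial p)$ from Section~\ref{subsec:SphericalGeom} (which is easily checked to hold for the initial hemispheres as well, since $\tau([\Sp^2_\pm])=1=\sigma_1(A)$). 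Taking expectations in \eqref{eq:Martingale}, substituting $\EE L_\bS(Y_s)=4\pi(s+1)$ and integrating yields $\lambda_\bZ=2+\int_0^t 2(s+1)\,\dint s=t^2+2t+2$.

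Next, since $\tau$ is diffuse on $\sG(\Sp^2)$, each splitting event almost surely has $g$ meeting $\partial p$ at two points lying in the relative interiors of two \emph{distinct} edges, on no pre-existing great circle, and such that no three maximal segments ever concur at a point. A single such event therefore increments $|\bZ|$ and $|\bM|$ each by $1$ (the new segment $p\cap g$ is a fresh maximal segment); it increments $|\bV|$ by $2$ (the two new endpoints of $p\cap g$) and $|\bE|$ by $3$ (two existing edges are each cut in two and $p\cap g$ is itself a new edge; for the very first split of each hemisphere the two arcs of the equator $A$ play the role of the subdivided edges, so the same count applies). Combined with the initial values $|\bM(Y_0)|=|\bV(Y_0)|=|\bE(Y_0)|=0$, these deterministic increments immediately yield $\lambda_\bM=\lambda_\bZ-2=t^2+2t$, $\lambda_\bV=2\lambda_\bM$ and $\lambda_\bE=3\lambda_\bM$, with no further martingale input required.

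For $\lambda_\bS$ the increment analysis splits into two cases. If the cell $p$ being split has $k\ge 2$ sides, then $g$ cuts exactly two of them in their interiors and the two sub-cells together contribute $k+4$ entries to the multiset $\bS$ (the new side $p\cap g$ is counted once on each side of $g$), so $\Delta|\bS|=+4$. If $p$ is a hemisphere ($k=1$) then its unique side $A$ is cut once and the two sub-cells contribute only $4$ sides in total, giving $\Delta|\bS|=+3$. The only hemispherical cells encountered are $\Sp^2_\pm$, whose first-split times $T_\pm$ are independent $\mathrm{Exp}(1)$-variables since $\tau([\Sp^2_\pm])=1$, so the expected number of hemispherical splits by time $t$ is $2(1-e^{-t})$. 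Starting from $|\bS(Y_0)|=2$ and summing increments,
\[
\lambda_\bS=2+4\lambda_\bM-2(1-e^{-t})=2\bigl(2(t^2+2t)+e^{-t}\bigr).
\]
The main subtlety is precisely this case distinction for $\lambda_\bS$: recognising the hemispherical regime as exceptional with increment $+3$ rather than $+4$, and handling the multiset convention by which the common new side is counted twice. All other pieces reduce to a single application of Lemma~\ref{lem:martingale} combined with almost-sure deterministic event-wise counts.
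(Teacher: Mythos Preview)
Your proof is correct and follows essentially the same approach as the paper: the martingale identity with $\phi\equiv 1$ for $\lambda_\bZ$, then deterministic per-split increments $(+1,+2,+3)$ for $\lambda_\bM,\lambda_\bV,\lambda_\bE$, and finally a side-count argument distinguishing hemisphere splits from generic ones for $\lambda_\bS$. The only cosmetic difference is in the packaging of the $\lambda_\bS$ computation: you track the increment ($+3$ for a hemisphere, $+4$ otherwise) and sum, whereas the paper states the equivalent identity $\sum_{p\subset\Sp^2_+}|\cS(p)|=4\cdot|\{m\in\bM:m\subset\Sp^2_+\}|$ on $\{T_+\le t\}$ directly; both rest on the same observation and the same $\mathrm{Exp}(1)$ lifetimes of $\Sp^2_\pm$.
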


\bigskip

If $\bX\in\{\bE,\bS,\bM\}$ we let $\ell_\bX$ be the mean spherical length of the typical object of class $\bX$ as introduced in Section \ref{subsec:SetClasses}. Moreover, $\ell_{\partial\bZ}$ and $a_\bZ$ indicate the mean spherical perimeter length and the mean spherical area of the typical cell of $Y_t$. 

\begin{theorem}\label{thm:Lengths}
Fix $t>0$. Then, $a_\bZ={4\pi\over t^2+2t+2}$ and $\ell_\bX$ for $\bX\in\{\partial\bZ,\bE,\bS,\bM\}$ is given as follows.
\begin{center}
\begin{tabular}{|c||c|c|c|c|}
\hline
\parbox[0pt][2em][c]{0cm}{}  & $\bX=\partial\bZ$ & $\bX=\bE$ & $\bX=\bS$ & $\bX=\bM$\\
\hline
\parbox[0pt][2em][c]{0cm}{} $\ell_{\bX}$ & ${4\pi(t+1)\over t^2+2t+2}$ & ${2\pi(1+t-e^{-2t})\over 3(t^2+2t)}$ & ${2\pi(t+1)\over 2(t^2+2t)+e^{-t}}$ & ${2\pi\over t+2}$\\
\hline
\end{tabular}
\end{center}
\end{theorem}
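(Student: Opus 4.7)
The plan is to derive all five formulas from a single Campbell-type identity for the Palm distribution, then substitute the intensities and total lengths already established in Theorems 3.2 and 3.1; no new martingale argument is needed. The shape of the claim --- every right-hand side is a ratio (total quantity)/(intensity) --- already signals this route.

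Concretely, for each $\bX\in\{\bZ,\bE,\bS,\bM\}$ and each rotation-invariant, measurable functional $\Phi:\sP(\Sp^2)\to[0,\infty)$, the definition of $\PP_\bX^0$ together with the equivariance $m(\vartheta y)=\vartheta m(y)$ of the centre function yields the mass-transport identity
\begin{align*}
\lambda_\bX\cdot\EE_{\PP_\bX^0}\!\big[\Phi(\text{typical object of class }\bX)\big]\;=\;\EE\sum_{y\in\bX(Y_t)}\Phi(y).
\end{align*}
The reason is that, for $\vartheta\in\Theta_{m(y)}$, the object of $\vartheta^{-1}Y_t$ whose centre is the north pole $e$ is exactly $\vartheta^{-1}y$, and rotation invariance of $\Phi$ gives $\Phi(\vartheta^{-1}y)=\Phi(y)$, so the inner $\nu_{m(y)}$-integral in the Palm definition trivialises.

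I would then specialise $\Phi$ as follows. For $a_\bZ$ take $\Phi(p)=\sigma_2(p)$: since the cells tessellate $\Sp^2$ and $\sigma_2$ is normalised, $\sum_{p\in\bZ(Y_t)}\sigma_2(p)=1$ almost surely, and multiplying by the surface-area factor $4\pi$ gives $a_\bZ=4\pi/\lambda_\bZ$. For $\ell_{\partial\bZ}$ take $\Phi(p)=\sigma_1(\partial p)$ and use $L_\bS=4\pi(t+1)$ from Theorem \ref{thm:TotalLength} to obtain $\ell_{\partial\bZ}=L_\bS/\lambda_\bZ$. For $\bX\in\{\bE,\bM\}$ and $\Phi=\sigma_1$, the left-hand-side sums are directly $L_\bE/(2\pi)$ and $L_\bM/(2\pi)$, giving $\ell_\bE=L_\bE/\lambda_\bE$ and $\ell_\bM=L_\bM/\lambda_\bM$. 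For $\bX=\bS$ and $\Phi=\sigma_1$, the multiplicity convention for $\bS$ (each cell-side counted once per incident cell) yields $\sum_{s\in\bS(Y_t)}\sigma_1(s)=\sum_{p\in\bZ(Y_t)}\sigma_1(\partial p)$, so the numerator is again $L_\bS/(2\pi)$ and $\ell_\bS=L_\bS/\lambda_\bS$. Inserting the intensities from Theorem \ref{thm:Intensities} produces the five tabulated expressions.

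Nothing in this proof is technically deep; the entire argument is bookkeeping once the Campbell identity above is in place. The only steps demanding a sentence of justification are the trivialisation of the inner integral in the Palm definition (which rests on rotation invariance of $\Phi$ together with the chosen equivariance of $m$) and the observation that $L_\bS$ plays the dual role of total cell-perimeter and total side-length, which is what allows the same numerator to appear with two different denominators $\lambda_\bZ$ and $\lambda_\bS$.
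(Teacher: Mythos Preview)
Your proposal is correct and follows the same route as the paper: the paper's proof simply asserts the ratio identities $\ell_{\partial\bZ}=L_\bS/\lambda_\bZ$, $\ell_\bE=L_\bE/\lambda_\bE$, $\ell_\bS=L_\bS/\lambda_\bS$, $\ell_\bM=L_\bM/\lambda_\bM$ and $a_\bZ=4\pi/\lambda_\bZ$, then invokes Theorems \ref{thm:TotalLength} and \ref{thm:Intensities}. Your version is the same argument with the Campbell-type identity spelled out explicitly rather than taken for granted.
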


\bigskip

Finally, we consider adjacency relationships. Following the terminology in \cite{WeissCowan}, we let $\bX$ and $\bY$ be two of the set classes introduced in Section \ref{subsec:SetClasses} and say that $x\in\bX$ and $y\in\bY$ are adjacent if $x\subseteq y$ or $y\subseteq x$. By $\mu_{\bX\bY}$ we denote the mean number of objects of class $\bY$ that are adjacent to the typical object of class $\bX$. For example $\mu_{\bZ\bV}$ is the mean number of vertices on the boundary of the typical cell, while $\mu_{\bM\bE}$ is the mean number of edges contained in the typical maximal segment of the splitting tessellation $Y_t$.

\begin{theorem}\label{thn:Adjacencies}
Fix $t>0$ and let $\bX,\bY\in\{\bZ,\bE,\bV,\bS,\bM\}$. Then $\mu_{\bX\bY}$ is given as follows.
\begin{center}
\begin{tabular}{|c||c|c|c|c|c|}
\hline
\parbox[0pt][2em][c]{0cm}{} $\mu_{\bX\bY}$ & $\bX=\bZ$ & $\bX=\bE$ & $\bX=\bV$ & $\bX=\bS$ & $\bX=\bM$\\
\hline
\hline
\parbox[0pt][2em][c]{0cm}{} $\bY=\bZ$ & $1$ & $2$ & $3$ & $-$ & $-$\\
\hline
\parbox[0pt][2em][c]{0cm}{} $\bY=\bE$ & ${6(t^2+2t)\over t^2+2t+2}$ & $1$ & $3$ & ${3(t^2+2t)\over 2(t^2+2t)+e^{-t}}$ & ${3t+2\over t+2}$\\
\hline
\parbox[0pt][2em][c]{0cm}{} $\bY=\bV$ & ${6(t^2+2t)\over t^2+2t+2}$ & $2$ & $1$ & ${5(t^2+2t)\over 2(t^2+2t)+e^{-t}}$ & ${4(t+1)\over t+2}$\\
\hline
\parbox[0pt][2em][c]{0cm}{} $\bY=\bS$ & $-$ & $2$ & $5$ & $-$ & ${4(t+1)\over t+2}$\\
\hline
\parbox[0pt][2em][c]{0cm}{} $\bY=\bM$ & $-$ & ${3t+2\over 3(t+2)}$ & ${2(t+1)\over t+2}$ & ${2t^2+2t\over 2(t^2+2t)+e^{-t}}$ & $1$\\
\hline
\end{tabular}
\end{center}
\end{theorem}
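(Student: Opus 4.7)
The plan is to assemble the table from four ingredients: (i) trivial self-adjacencies, (ii) the local T-vertex and edge combinatorics forced by the splitting construction, (iii) the exchange identity $\lambda_{\bX}\mu_{\bX\bY}=\lambda_{\bY}\mu_{\bY\bX}$ combined with Theorem \ref{thm:Intensities}, and (iv) the martingale of Lemma \ref{lem:martingale} applied to functionals that isolate the role of the equator $A$, which does not belong to $\bM$.

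I would first record $\mu_{\bX\bX}=1$ for $\bX\in\{\bZ,\bE,\bV,\bM\}$. A crucial structural fact is that every vertex of $Y_t$ is a \emph{T-vertex}: new vertices are born only as the two endpoints of a freshly inserted chord $p\cap g$, so that at every vertex one pre-existing segment continues across the vertex while exactly one new segment terminates there. This local picture forces $\mu_{\bV\bE}=\mu_{\bV\bZ}=3$ and $\mu_{\bV\bS}=5$, the last because the two cells on the side of the $T$'s vertical each contribute two sides meeting at the vertex, whereas the third cell contains the vertex in the relative interior of one of its sides. On the edge side, every edge separates exactly two cells, has two vertex-endpoints and is contained in exactly one side of each adjacent cell, so $\mu_{\bE\bZ}=\mu_{\bE\bV}=\mu_{\bE\bS}=2$. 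Combining these identities with Theorem \ref{thm:Intensities} through the exchange identity $\lambda_{\bX}\mu_{\bX\bY}=\lambda_{\bY}\mu_{\bY\bX}$ (a Campbell-type consequence of the Palm construction of \cite{RotherZaehle} on the homogeneous space $\Sp^2\cong SO(3)/SO(2)$) then produces $\mu_{\bZ\bE}$, $\mu_{\bZ\bV}$, $\mu_{\bS\bE}$ and $\mu_{\bS\bV}$.

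The six remaining entries all involve $\bM$. Introducing the expected numbers $V_{\mathrm{eq}},E_{\mathrm{eq}},S_{\mathrm{eq}}$ of vertices, edges and sides contained in $A$, and noting that a vertex on $A$ meets only one maximal segment (the vertical chord leaving $A$) while one off $A$ meets two, and that an edge or side lying on $A$ belongs to no maximal segment while any other edge or side lies in the unique maximal segment on its great circle, one finds
\[
\mu_{\bV\bM}\lambda_{\bV}=2\lambda_{\bV}-V_{\mathrm{eq}},\quad \mu_{\bE\bM}\lambda_{\bE}=\lambda_{\bE}-E_{\mathrm{eq}},\quad \mu_{\bS\bM}\lambda_{\bS}=\lambda_{\bS}-S_{\mathrm{eq}}.
\]
The reverse adjacencies $\mu_{\bM\bV}, \mu_{\bM\bE}, \mu_{\bM\bS}$ follow once more from the exchange identity, and $E_{\mathrm{eq}}=V_{\mathrm{eq}}$ because the vertices on the closed circle $A$ cut it into exactly that many arcs, which are precisely the edges contained in $A$.

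The main obstacle is therefore the direct computation of $V_{\mathrm{eq}}$ and $S_{\mathrm{eq}}$ via Lemma \ref{lem:martingale}. For $V_{\mathrm{eq}}$ I would take $\phi(p):=$ (number of corners of $p$ lying on $A$), which satisfies $\Sigma_\phi(Y_t)=2V_{\mathrm{eq}}(Y_t)$ since each vertex on $A$ is a corner of exactly the two cells on the side of $A$ containing the $T$'s vertical. Splitting $p$ by $g$ changes this count by $2|\{w,-w\}\cap(\partial p\cap A)|$, where $\{w,-w\}=g\cap A$, and parametrising $g$ by its unit normal shows that $w$ is uniformly distributed on $A$, whence $\int_{\sG(\Sp^2)}|\{w,-w\}\cap(\partial p\cap A)|\,\tau(\dint\!g)=2\sigma_1(p\cap A)$ uniformly in $p$ (the hemispherical case $p=\Sp^2_\pm$ happens to satisfy the same identity). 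Summing over cells and using $\sum_p\sigma_1(p\cap A)=2\sigma_1(A)=2$ yields $\EE\mathbb{L}\Sigma_\phi(Y_s)=8$, and integration of \eqref{eq:Martingale} gives $V_{\mathrm{eq}}(t)=4t$. A parallel argument with $\phi(p):=$ (number of sides of $p$ contained in $A$) and starting value $\Sigma_\phi(Y_0)=2$ delivers $S_{\mathrm{eq}}$; here the initial cells $\Sp^2_\pm$ do require separate treatment, because when one of them is split the arc $p\cap A=A$ is a closed circle rather than a line segment, so that the two antipodal chord endpoints on $A$ cut it into only two sides (not three), producing a change of $1$ instead of the generic $2$ suggested by the chord-crossing analysis. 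Tracking the survival probabilities $e^{-s}$ of $\Sp^2_\pm$ yields $\EE\mathbb{L}\Sigma_\phi(Y_s)=4-2e^{-s}$ and hence $S_{\mathrm{eq}}=4t+2e^{-t}$. Inserting $V_{\mathrm{eq}}$, $E_{\mathrm{eq}}$ and $S_{\mathrm{eq}}$ together with Theorem \ref{thm:Intensities} into the displayed identities produces the remaining entries of the table.
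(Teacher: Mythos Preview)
Your proposal is correct and follows essentially the same architecture as the paper: deterministic local combinatorics at T-vertices and edges, the exchange identity $\lambda_{\bX}\mu_{\bX\bY}=\lambda_{\bY}\mu_{\bY\bX}$, and separate bookkeeping for objects on the equator (which is not in $\bM$).

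The one genuine difference lies in how you compute the equator quantities. For $V_{\mathrm{eq}}$ the paper does not apply the martingale to ``corners on $A$'' directly; instead it relates $V_{\mathrm{eq}}$ to the number of cells meeting $A$ via the almost-sure identity
\[
\sum_{v\in\bV}{\bf 1}(v\in A)=\Big[\sum_{p\in\bZ}{\bf 1}(p\cap A\neq\emptyset)\Big]-{\bf 1}(T_+>t)-{\bf 1}(T_->t),
\]
and then runs Lemma \ref{lem:martingale} with $\phi(p)={\bf 1}(p\subset\Sp_+^2,\,p\cap A\neq\emptyset)$ to obtain $\EE\sum_{p}{\bf 1}(p\cap A\neq\emptyset)=4t+2e^{-t}$; subtracting $2e^{-t}$ gives $V_{\mathrm{eq}}=4t$, and the same computation simultaneously yields $S_{\mathrm{eq}}=4t+2e^{-t}$. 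Your route---applying the martingale to $\phi(p)=$ (number of corners of $p$ on $A$) and reducing the generator to the constant $8$ via the uniform distribution of $g\cap A$ on $A$---is a legitimate alternative that avoids the detour through cell counts and the survival indicators of $\Sp_\pm^2$; the hemisphere case indeed falls under the same formula, as you note. Your computation of $S_{\mathrm{eq}}$ is, once unpacked, the same as the paper's (your $\phi$ equals ${\bf 1}(p\cap A\neq\emptyset)$), and your observation $E_{\mathrm{eq}}=V_{\mathrm{eq}}$ is used by the paper as well, though stated there without justification.
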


\medskip

\begin{remark}
It appears that the mean adjacencies $\mu_{\bZ\bM}$, $\mu_{\bM\bZ}$, $\mu_{\bS\bS}$, $\mu_{\bZ\bS}$ and $\mu_{\bS\bZ}$ cannot be computed by means of the tools developed so far. This goes hand in hand with the situation for STIT tessellations in the Euclidean plane, where these mean values have only been computed recently in \cite{CowanSTIT,CowanTh} using much more sophisticated tools and arguments.
\end{remark}

Let us rephrase some of the mean adjacencies presented in Theorem \ref{thn:Adjacencies} in a different way and let us compare them with those in the Euclidean case. We first notice that, as $t\to\infty$, the mean values $\mu_{\bX\bY}$ in Theorem \ref{thn:Adjacencies} tend to the corresponding mean values for STIT tessellations in the Euclidean plane, which are all independent of the time parameter $t$ in this case. However, they behave substantially different for `small' values of $t$. We illustrate this phenomenon in three representative cases. At first, the mean number of vertices on the boundary of the typical cell is ${6(t^2+2t)\over t^2+2t+2}$. As $t\to\infty$, this tends to $6$, which shows that for large $t$, the typical cell has six vertices on its boundary.  This is similar to the Euclidean case, where this mean values does not depend on $t$. On the other hand, for small time parameters $t$, the mean values on the sphere and in the plane behave differently. While on the sphere $\mu_{\bZ\bV}\to 0$, as $t\to 0$, we have that $\mu_{\bZ\bV}=6$ in the Euclidean case, independently of $t$ and as anticipated above. Secondly, we consider the typical maximal segment. The mean number of vertices in its relative interior is given by $\mu_{\bM\bV}-2={2t\over t+2}$. This tends to $2$, as $t\to\infty$, which is the mean value known from the Euclidean case (where it is independent of $t$). Similarly, the mean number of vertices in the relative interior of the typical side is $\mu_{\bS\bV}-2={t^2+2t-2e^{-t}\over 2(t^2+2t)+e^{-t}}$, which tends to $1/2$, as $t\to\infty$, which in turn is also the corresponding value in the Euclidean case (again, independently of $t$). On the contrary, we have that $\mu_{\bM\bV}-2\to 0$ and $\mu_{\bS\bV}\to 0$, as $t\to 0$.

\medbreak

Let us finally compare some of the mean values we computed with those for Poisson great circle tessellations on the sphere, see \cite{ArbeiterZaehle,MilesSphere}. To define the model, let for some $\gamma>0$, $\eta_\gamma$ be a Poisson point process on $\Sp^2$ whose intensity measure is given by $\gamma\,\sigma_2$. By the Poisson great circle tessellation on $\Sp^2$ with parameter $\gamma>0$ we understand the random closed set
$$
Y_\gamma^{\rm GC} := \{A\}\cup\bigcup_{u\in\eta_\gamma}(\Sp^2\cap u^\perp)\,.
$$
To have the analogy with our splitting tessellations, we have included the equator in $Y_\gamma^{\rm GC}$. Note that in contrast to splitting tessellations the cells of a Poisson great circle tessellation are side-to-side, meaning that the intersection of two adjacent cells is either a common corner ($0$-face) or a common side ($1$-face) of both cells. To compare a great circle tessellation with the splitting tessellation $Y_t$, we choose $\gamma=t\geq 0$. This turns out to be a reasonable choice, as the following proposition shows.

\begin{proposition}\label{prop:GreatCircles}
For all $t\geq 0$ one has that
\begin{alignat*}{3}
& L_\bE(Y_t^{\rm GC}) = 2\pi(1+t-e^{-t})\,,\qquad && L_{\bS}(Y_t^{\rm GC}) = 4\pi(t+1)\,,\qquad && \phantom{xxxxx}\\
&\lambda_\bV(Y_t^{\rm GC}) = t^2+2t\,,\qquad &&\lambda_{\bE}(Y_t^{\rm GC}) = 2(t^2+2t)\,,\qquad &&\lambda_{\bZ}(Y_t^{\rm GC}) = t^2+2t+2\,.
\end{alignat*}
Moreover, for $t>0$,
\begin{alignat*}{3}
& \ell_\bE(Y_t^{\rm GC}) = {\pi(1+t-e^{-t})\over t^2+2t}\,,\qquad && \ell_{\partial\bZ}(Y_t^{\rm GC}) = {4\pi(t+1)\over t^2+2t+2}\,,\qquad && a_\bZ(Y_t^{\rm GC}) = {4\pi\over t^2+2t+2}\,.
\end{alignat*}
\end{proposition}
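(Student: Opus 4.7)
The approach is to condition on $N:=|\eta_t|$, which is Poisson distributed with parameter $t$. Together with the equator $A$, there are $k:=N+1$ great circles on $\Sp^2$. Since the distribution on $\Sp^2$ defining $\eta_t$ is absolutely continuous, these great circles almost surely lie in general position (no three concurrent), so all subsequent counting formulas may be applied pathwise.

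For $k$ great circles in general position, elementary spherical combinatorics give $V=k(k-1)$ vertices (each pair meets in two antipodal points), $E=2k(k-1)$ edges (each great circle is cut into $2(k-1)$ arcs by the remaining $k-1$), and, via the Euler relation $V-E+F=2$ on $\Sp^2$, exactly $F=k^2-k+2$ cells. Substituting $k=N+1$ and using $\EE[N]=t$ and $\EE[N^2]=t^2+t$ yields the three intensities $\lambda_\bV(Y_t^{\rm GC})$, $\lambda_\bE(Y_t^{\rm GC})$ and $\lambda_\bZ(Y_t^{\rm GC})$.

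For the length functionals, I use that each great circle has spherical length $2\pi$. For the total side length I argue uniformly in $N$: when $N\geq 1$ the tessellation is side-to-side, each edge is a side of exactly two cells, and the total side length is twice the total edge length $2\cdot 2\pi(N+1)=4\pi(N+1)$; when $N=0$ the equator is a single side of each of the two hemispheres, giving $L_\bS=4\pi=4\pi(N+1)$ as well. Taking expectation yields $L_\bS(Y_t^{\rm GC})=4\pi(t+1)$. The edge length needs more care because edges are defined to have two distinct vertices as endpoints: on $\{N=0\}$ there are no vertices, hence no edges, and $L_\bE=0$, while on $\{N\geq 1\}$ the edges partition the $N+1$ great circles and so $L_\bE=2\pi(N+1)$. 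Using $\PP[N=0]=e^{-t}$, a direct computation gives
\begin{equation*}
\EE\bigl[2\pi(N+1)\mathbf{1}\{N\geq 1\}\bigr]=2\pi\bigl(t+1-e^{-t}\bigr),
\end{equation*}
which is the claimed formula for $L_\bE(Y_t^{\rm GC})$.

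The mean cell area is $a_\bZ(Y_t^{\rm GC})=4\pi/\lambda_\bZ(Y_t^{\rm GC})$, since the total (unnormalized) spherical area is $4\pi$. The mean perimeter $\ell_{\partial\bZ}=L_\bS/\lambda_\bZ$ and mean edge length $\ell_\bE=L_\bE/\lambda_\bE$ follow by a Palm/Campbell-type identity, applied to the invariant point processes $\xi_\bZ$ and $\xi_\bE$ introduced in Section \ref{subsec:SetClasses}, exactly as for the splitting case. The only delicate point in the proof is the bookkeeping on the event $\{N=0\}$: this is responsible for the $e^{-t}$ correction in $L_\bE$ while cancelling out in $L_\bS$, and its consistent handling with respect to the definitions of edges versus sides is the main thing to get right.
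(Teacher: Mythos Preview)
Your argument is correct and follows essentially the same route as the paper: both condition on the Poisson number $N$ of great circles, use the combinatorial counts $N(N+1)$ vertices, $2N(N+1)$ edges, and $N^2+N+2$ cells (you derive the cell count via Euler's formula, the paper cites Miles/induction), handle the degenerate case $N=0$ separately for $L_\bE$, and then obtain $a_\bZ$, $\ell_{\partial\bZ}$, $\ell_\bE$ as ratios. Your explicit remark that the counting formulas hold uniformly at $k=1$ (i.e.\ $N=0$) is a nice touch that the paper leaves implicit.
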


A comparison of Proposition \ref{prop:GreatCircles} and Theorem \ref{thm:Lengths} shows, for example, that the mean area and the mean perimeter length of the typical cell of $Y_\gamma^{\rm GC}$ and $Y_t$ coincide, while Theorem \ref{thm:TotalLength} ensures that $L_\bS(Y_t)=L_\bS(Y_t^{\rm GC})$. On the other hand, a Poisson great circle tessellation cannot have maximal segments. 

\subsection{Capacity functional and intersection property}\label{subsec:CapaIntersection}

In this section we turn to more sophisticated properties of the splitting tessellations $Y_t$. Let us fix $t\geq 0$ and recall from \cite[Chapter 2.2]{SchneiderWeil} that the so-called capacity functional of the random closed set $Y_t$ on $\Sp^2$ is defined as
$$
T_{Y_t}(C)=\PP(Y_t\cap C\neq\emptyset)\,,
$$
where $C$ is a closed subset of $\Sp^2$. It uniquely characterizes the distribution of $Y_t$ and is the analogue of a distribution function of a real-valued random variable, cf.\ \cite{SchneiderWeil}. For this reason, it is one of the fundamental characteristics that can be associated with a random closed set. Our first goal is to compute the capacity functional of $Y_t$. Not surprisingly, the result is similar to what is known from the Euclidean case, see Lemma 3 and Lemma 4 in \cite{NagelWeiss2005}.

To present the theorem, we denote the spherical convex hull of a set $B\subset\Sp^2$ by $\conv(B)$ and write $[B_1|B_2]$ for the set of great circles that separate two sets $B_1,B_2\subset\Sp^2$ that are contained in a common open hemisphere, i.e.,
$$
[B_1|B_2]=\{g\in\sG(\Sp^2):B_1\cap g=B_2\cap g=\emptyset,\conv(B_1\cup B_2)\cap g\neq\emptyset\}\,.
$$

\begin{theorem}\label{thm:Capacity}
Let $C\subset\Sp^2$ be closed and contained either in $\Sp^2_+\setminus A$ or $\Sp^2_-\setminus A$. Suppose in addition that $C$ is connected. Then
$$1-T_{Y_t}(C)=e^{-t\tau([C])}\,.
$$
More generally, if $C=C_1\cup\ldots\cup C_m$ for some $m\geq 2$ with pairwise disjoint, connected and closed subsets $C_1,\ldots,C_m\subset\Sp^2$ with either $C_1,\ldots,C_m\subset\Sp_+^2\setminus A$ or $C_1,\ldots,C_m\subset\Sp_-^2\setminus A$ for all $i\in\{1,\ldots,m\}$, then
\begin{equation}\label{eq:CapaMulti}
\begin{split}
1-T_{Y_t}(C) = & e^{-t\tau([\conv(C)])} + \sum_{Z_1,Z_2}\tau([Z_1|Z_2])\\
&\qquad\times\int_0^te^{-s\tau([\conv(C)])}\,(1-T_{Y_{t-s}}(Z_1))(1-T_{Y_{t-s}}(Z_2))\,\dint\!s
\end{split}
\end{equation}
where the sum runs over all partitions $Z_1=\bigcup_{j\in J}C_j$, $Z_2=\bigcup_{j\in J^c}C_j$, where $J$ is a proper non-empty subset of $\{1,\ldots,m\}$.
\end{theorem}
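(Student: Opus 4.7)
The plan is to derive the connected case via the martingale framework of Lemma \ref{lem:martingale}, and then to bootstrap to the multi-component formula through a strong Markov argument and induction on the number of components. For the connected case I take $\phi(p) := \mathbf{1}(C \subseteq p)$. Because $C$ is connected, at most one cell of any tessellation contains $C$, so $\Sigma_\phi(Y_t) \in \{0,1\}$ and in fact $\Sigma_\phi(Y_t) = \mathbf{1}(Y_t \cap C = \emptyset)$. Only cells with $C \subseteq p$ contribute to $\mathbb{L}\Sigma_\phi(T)$; for such $p$ one has $[C] \subseteq [p]$, and for $g \in [p]$ the connectedness of $C$ yields
$$
\phi(p \cap g^+) + \phi(p \cap g^-) - \phi(p) = \begin{cases} 0 & \text{if } g \notin [C], \\ -1 & \text{if } g \in [C], \end{cases}
$$
so $\mathbb{L}\Sigma_\phi(T) = -\tau([C])\,\Sigma_\phi(T)$. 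Combined with $\Sigma_\phi(Y_0) = 1$ (as $C$ lies in exactly one hemisphere), Lemma \ref{lem:martingale} implies that $h(t) := \EE\,\Sigma_\phi(Y_t)$ satisfies $h(t) = 1 - \tau([C]) \int_0^t h(s)\,\dint\!s$, whose unique solution is $e^{-t\tau([C])}$.

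For the multi-component case I induct on $m$. Let $S$ denote the first time at which the cell containing $\conv(C)$ is split by a great circle meeting $\conv(C)$. Since $\conv(C)$ is connected, the previous step applied to $\conv(C)$ gives $\PP(S > s) = e^{-s\tau([\conv(C)])}$, and the Markov property of $(Y_t)_{t \ge 0}$ ensures that, conditional on $\{S=s\le t\}$, the splitting circle at time $S$ is distributed as $\tau(\,\cdot\,)/\tau([\conv(C)])$ on $[\conv(C)]$, independently of $\cF_{S-}$. On $\{S > t\}$ all of $C$ is uncovered, which contributes the first summand $e^{-t\tau([\conv(C)])}$. On $\{S = s \le t\}$ the event $\{Y_t \cap C = \emptyset\}$ fails whenever the splitting circle $g$ meets some $C_i$; otherwise $g \in [Z_1|Z_2]$ for some partition $\{Z_1, Z_2\}$ of the components, the two sets $Z_i$ land in separate daughter cells, and by the strong Markov property the subsequent splitting processes inside those daughter cells evolve independently.

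The main obstacle is the final identification of the survival probability of $Z_i$ inside its daughter cell with $1 - T_{Y_{t-s}}(Z_i)$ of the original process on $\Sp^2$: the sub-process inside a daughter cell is not literally a copy of $(Y_t)$, but a splitting process started from a single, generally non-hemispherical cell. For connected $Z_i$ the first step already yields $e^{-(t-s)\tau([Z_i])}$ irrespective of the ambient cell, and for multi-component $Z_i$, which has strictly fewer components than $C$, the inductive hypothesis furnishes the same recursion for the sub-process. Once this self-similarity is in place, conditioning on $(S, g)$ and using independence in the daughter cells produces the factorization $(1 - T_{Y_{t-s}}(Z_1))(1 - T_{Y_{t-s}}(Z_2))$; integrating against the exponential density of $S$ and summing over the partitions (with due care of the ordered-versus-unordered bookkeeping indexed by $J$) then delivers \eqref{eq:CapaMulti}.
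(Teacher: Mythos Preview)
Your proof is correct and follows essentially the same route as the paper: the connected case via the martingale of Lemma~\ref{lem:martingale} with $\phi(p)=\mathbf{1}(C\subseteq p)$ leading to the integral equation $h(t)=1-\tau([C])\int_0^t h(s)\,\dint s$, and the multi-component case via conditioning on the first time a great circle separates the components of $C$. Your explicit articulation of the self-similarity issue---that the survival probability of $Z_i$ inside a daughter cell coincides with $1-T_{Y_{t-s}}(Z_i)$ because the generator argument is insensitive to the ambient cell, established inductively on the number of components---is in fact a point the paper passes over in a single phrase (``Using the Markovian description of the continuous time dynamic''), so your version is, if anything, more careful.
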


It is often important in the theory of spherical random tessellations that one can control the point process that arises as the intersection of a spherical tessellation with a fixed great circle. While this is easy for Poisson great circle tessellations in view of the well-known mapping properties of Poisson point processes, the intersection point process is only poorly understood for Poisson-Voronoi tessellations on the sphere (and even in the Euclidean case). We use the previous result on the capacity functional of $Y_t$ to prove that the point process that arises if the splitting tessellation $Y_t$ is intersected by a fixed great circle different from $A$ is the concatenation of two independent Poisson point processes plus a pair of antipodal points on the equator $A$. This property is similar to the Euclidean case, but on the sphere the equator plays a special role.

\begin{theorem}\label{thm:intersection}
Fix $t\geq 0$ and let $g\in\sG(\Sp^2)\setminus\{A\}$ be a fixed great circle. Then the random point processes $Y_t\cap(\Sp_+^2\setminus A)\cap g$ and $Y_t\cap(\Sp_-^2\setminus A)\cap g$ are independent and both are Poisson point processes whose intensity measure is given by $t$ times the normalized spherical length measure on $(\Sp_+^2\setminus A)\cap g$ and $(\Sp_-^2\setminus A)\cap g$, respectively.
\end{theorem}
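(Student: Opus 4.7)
The plan is to identify each of the two intersection point processes $\Phi_\pm := Y_t \cap g \cap (\Sp^2_\pm \setminus A)$ as a Poisson process by computing its void probabilities on a generating class of subsets of the half-circle, and to derive their independence directly from the construction of the splitting process.

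Independence is a built-in feature of the initial condition $Y_0 = A$: at time zero the tessellation already consists of the two cells $\Sp^2_\pm$, and by the Markov branching description in Section \ref{subsec:Splitting} (when a cell is split, the two daughter cells evolve as independent splitting processes of the same type) the restrictions of $Y_t$ to the two open hemispheres $\Sp^2_+ \setminus A$ and $\Sp^2_- \setminus A$ are independent random closed sets. Intersecting with $g$ yields the asserted independence of $\Phi_+$ and $\Phi_-$.

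For the Poisson property, I would show that for every finite disjoint union $C = C_1 \cup \cdots \cup C_m$ of closed arcs on $g \cap (\Sp^2_+ \setminus A)$ the void probability equals $\PP(Y_t \cap C = \emptyset) = e^{-t\sigma_1(C)}$. The case $m = 1$ combines the first part of Theorem \ref{thm:Capacity}, which gives $e^{-t\tau([C])}$, with the spherical Crofton formula of Lemma \ref{lem:Crofton}: since a generic great circle meets the arc $C$ in a single point, $\tau([C]) = v_1(C) = \sigma_1(C)$. For $m \geq 2$ I would proceed by induction using the recursion in Theorem \ref{thm:Capacity}. For arcs on $g$ lying in one open hemisphere, $\conv(C)$ is itself an arc on $g$ of length $L = \sigma_1(C) + \gamma$, where $\gamma$ denotes the total gap length, and each term $\tau([Z_1|Z_2])$ reduces to the length of those gaps corresponding to bipartitions of the $C_i$ realized by a single great-circle split (the ``contiguous'' splittings along $g$). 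After substituting the inductive hypothesis, the resulting integrals are of the form $\int_0^t e^{-sL} e^{-(t-s)\sigma_1(C)}\dint\!s = e^{-t\sigma_1(C)}(1 - e^{-t\gamma})/\gamma$, and telescoping cancellations with the leading $e^{-tL}$ term yield the claimed $e^{-t\sigma_1(C)}$.

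Since finite disjoint unions of closed arcs form a generating class for the Borel subsets of the open half-circle $g \cap (\Sp^2_+ \setminus A)$, the matching of void probabilities identifies $\Phi_+$, which is simple because $g$ meets the $1$-skeleton of $Y_t$ transversally in isolated points, as the Poisson point process with intensity measure $t\sigma_1$. The analogous argument on $\Sp^2_- \setminus A$ combined with the first step completes the proof. The main technical hurdle will lie in the inductive step of the void-probability calculation: one must carefully enumerate the bipartitions coming from actual single-split separations along $g$ and check that the resulting algebra telescopes. A streamlined alternative is a direct compensator argument based on the Markov structure of the splitting process, noting that before $C$ is first hit each $C_i$ lies in a single cell of $Y_t$, and that the total rate at which an edge meets $C$ equals $\sum_{p}\tau([C \cap p]) = \sum_i \sigma_1(C_i) = \sigma_1(C)$ because great circles in one hemisphere hit at most one of the $C_i$ on the common great circle $g$; this deterministic rate directly identifies the first-hitting time of $C$ with an exponential distribution of parameter $\sigma_1(C)$.
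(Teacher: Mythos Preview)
Your proposal is correct and follows essentially the same route as the paper: independence from the Markov branching structure, then identification of the Poisson law by computing void probabilities on finite disjoint unions of arcs via induction on $m$, feeding the recursion of Theorem~\ref{thm:Capacity} and using that only the ``contiguous'' bipartitions (those separated by a single gap $I_{j,j+1}$) contribute a nonzero $\tau([Z_1|Z_2])$, after which the algebra telescopes exactly as you describe. One small slip: the spherical Crofton formula gives $\tau([C]) = 2v_1(C) = 2\sigma_1(C)$ for an arc $C$ (since $v_0$ of a point equals $\tfrac12$, not $1$), so be careful with the constant when matching the intensity measure; the paper sidesteps this by leaving the void probabilities in the form $e^{-t\tau([I_j])}$.
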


\begin{remark}\label{rem:InterEquator}
In Theorem \ref{thm:intersection} one cannot replace the fixed great circle $g$ by a random one that depends on $Y_t$, for example by a great circle that carries one of the maximal segments of $Y_t$.

If $g$ is the equator $A$ then the point process induced by the random process $(Y_t)_{t>0}$ in the upper and the lower hemisphere is a Poisson point process plus a pair of two antipodal point and both point processes are independent. Hence, the induced point process on $A$ is the superposition of two independent Poisson point processes plus two pairs of antipodal points that arise as the endpoints of the two maximal segments that appear first in the two hemispheres (provided they exist).
\end{remark}

\section{Proofs}\label{sec:Proofs}

\subsection{Proof of Theorem \ref{thm:TotalLength}}

We start with the formula for $L_\bM$. Since the equator $A$ is, by definition, not a maximal segment, we can re-write $L_\bM$ as
\begin{align}\label{eq:RepresentationL1}
L_\bM = \pi\sum_{p\in\bZ(Y_t)} \sigma_1(\partial p)-2\pi\,.
\end{align}
We now apply Lemma \ref{lem:martingale} with $\phi(p)=\sigma_1(\partial p)$, take expectations in \eqref{eq:Martingale} and use Fubini's theorem, to see that
\begin{align*}
\EE\sum\limits_{p\in\bZ(Y_t)}\sigma_1(\partial p) &= \EE\Sigma_\phi(Y_0) + \EE\int_0^t\mathbb{L}\Sigma_\phi(Y_s)\,\dint\!s\\
&= 2 + \int_0^t\EE\sum_{p\in\bZ(Y_s)}\int_{[p]}[\Sigma_\phi(\oslash_{p,g}(Y_s))-\Sigma_\phi(Y_s)]\,\tau(\dint\!g)\dint\!s\\
&= 2+2\int_0^t\EE\sum_{p\in\bZ(Y_s)}\int_{[p]}\sigma_1(p\cap g)\,\tau(\dint\!g)\dint\! s\\
&= 2+2\int_0^t\EE\sum_{p\in\bZ(Y_s)}\int_{[p]}v_1(p\cap g)\,\tau(\dint\!g)\dint\! s\,.
\end{align*}
To the inner integral we apply the spherical Crofton formula from Lemma \ref{lem:Crofton}, which leads to
\begin{align*}
\EE\sum\limits_{p\in\bZ(Y_t)}\sigma_1(\partial p) = 2 + 2\int_0^t\EE\sum_{p\in\bZ(Y_s)}v_2(p)\,\dint\!s\,.
\end{align*}
Since $Y_t$ is a spherical tessellation, $\sum_{p\in\bZ(Y_s)}v_2(p)=v_2(\Sp^2)=1$ almost surely and hence
\begin{align}\label{eq:XXXX}
\EE\sum\limits_{p\in\bZ(Y_t)}\sigma_1(\partial p) = 2 + 2t\,.
\end{align}
Together with \eqref{eq:RepresentationL1} this proves that $L_\bM=\pi(2+2t)-2\pi=2\pi t$.

To deal with the total spherical edge length $L_\bE$ we observe that $L_\bE=L_\bM+2\pi$ whenever at least one of the two cells $\Sp^2_\pm$ of the initial tessellation $Y_0$ has been split by a great circle until time $t$. The latter event has probability $(1-e^{-t})+(1-e^{-t})-(1-e^{-t})(1-e^{-t})=1-e^{-2t}$, since the two cells $\Sp^2_\pm$ have independent and exponentially distributed random lifetimes with parameter $1$. This shows that $L_\bE=L_\bM+2\pi(1-e^{-2t})=2\pi(1+t-e^{-2t})$. To complete the proof, we finally notice that $L_\bS=2(L_\bM+2\pi)=4\pi t+4\pi$ since $Y_0=\{A,A\}$ and the equator has length $2\pi$.\hfill $\Box$

\subsection{Proof of Theorem \ref{thm:Intensities}}

We start with the number of cells and choose $\phi(p)\equiv 1$ in Lemma \ref{lem:martingale}. Upon taking expectations in \eqref{eq:Martingale} this shows that
\begin{align*}
\lambda_\bZ &= \EE\sum_{p\in\bZ(Y_t)} 1 = \EE\Sigma_\phi(Y_0) + \int_0^t \EE \sum_{p\in\bZ(Y_s)}\int_{[p]} [\Sigma_\phi(\oslash_{p,g}(Y_s))-\Sigma_\phi(Y_s)]\,\tau(\dint\!g)\dint\!s\\
&=2+\int_0^t\EE \sum_{p\in\bZ(Y_s)}\tau([p])\,\dint\!s = 2+\int_0^t\EE\sum_{p\in\bZ(Y_s)}\sigma_1(\partial p)\,\dint\!s\,.
\end{align*}
We know from \eqref{eq:XXXX} that the inner expectation equals $2(s+1)$, which implies that
\begin{align*}
\lambda_\bZ = 2+\int_0^t 2(s+1)\,\dint\!s = t^2+2t+2\,.
\end{align*}
Moreover, since each cell split increases the number of cells and the number of maximal segments by $1$, we have the obvious relation $\lambda_\bM=\lambda_\bZ-2=t^2+2t$.

Next, we notice that each cell split increases the number of vertices by $2$ and the number of edges by $3$. Moreover, we also notice that at time zero, there are no edges and vertices. This implies that $\lambda_\bV=2(\lambda_\bZ-2)=2(t^2+2t)$ and $\lambda_\bE=3(\lambda_\bZ-2)=3(t^2+2t)$.

It remains to compute $\lambda_\bS$, the mean number of sides of $Y_t$. To determine this quantity, we let $\cS(p)$ be the collection of sides of a spherically convex polygon $p\in\sP(\Sp^2)$ and let $T_+$ and $T_-$ be the random lifetimes of the two cells $\Sp^2_+$ and $\Sp^2_-$ of the initial tessellation $Y_0$. Now, we observe that $\lambda_\bS$ satisfies the relation
\begin{align*}
\lambda_\bS &= \EE\sum_{p\in\bZ,p\subseteq\Sp^2_+}|\cS(p)| + \EE\sum_{p\in\bZ,p\subseteq\Sp^2_-}|\cS(p)|\\
& = \EE\Big[1+\Big(4\Big(\sum_{m\in\bM,m\subset\Sp^2_+}1\Big)-1\Big){\bf 1}(T_+\leq t)\Big]+\EE\Big[1+\Big(4\Big(\sum_{m\in\bM,m\subset\Sp^2_-}1\Big)-1\Big){\bf 1}(T_-\leq t)\Big]\,.
\end{align*}
Since $T_+$ and $T_-$ are exponentially distributed with parameter $1$ and since
$$
\EE\Big[ \Big(\sum_{m\in\bM,m\subset\Sp^2_+}1\Big){\bf 1}(T_+\leq t)\Big] = \EE\Big[ \Big(\sum_{m\in\bM,m\subset\Sp^2_-}1\Big){\bf 1}(T_-\leq t)\Big] = {1\over 2}\lambda_\bM = {1\over 2}(t^2+2t)\,,
$$
we conclude that
\begin{align*}
\lambda_\bS= 2\Big(1+ 4\Big({1\over 2}(t^2+2t)\Big)-(1-e^{-t})\Big) = 4(t^2+2t)+2e^{-t}\,.
\end{align*}
This completes the proof. \hfill $\Box$

\subsection{Proof of Theorem \ref{thm:Lengths}}

We have that
$$
\ell_{\partial\bZ}={L_\bS\over\lambda_\bZ}\,,\qquad\ell_{\bE} = {L_\bE\over\lambda_\bE}\,,\qquad\ell_\bS = {L_\bS\over\lambda_\bS} \qquad\text{and}\qquad \ell_\bM = {L_\bM\over\lambda_\bM}\,.
$$
Moreover, $a_\bZ$ satisfies the relation $a_\bZ=4\pi/\lambda_\bZ$. Now, Theorem \ref{thm:TotalLength} and Theorem \ref{thm:Intensities} complete the proof. \hfill $\Box$

\subsection{Proof of Theorem \ref{thn:Adjacencies}}

We prepare the proof with the following lemma, which is applied repeatedly below.

\begin{lemma}\label{lem:KnotenAufAequator}
The mean number of vertices of $Y_t$ that are located on the equator $A$ equals $4t$ and the mean number of sides of $Y_t$ on $A$ is $4t+2e^{-t}$, i.e.,
$$
\EE\sum_{v\in\bV}{\bf 1}(v\in A) = 4t\qquad\text{and}\qquad\EE\sum_{s\in\bS}{\bf 1}(s\subset A)=4t+2e^{-t}\,.
$$
\end{lemma}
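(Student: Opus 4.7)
The plan is to apply Lemma~\ref{lem:martingale} with the functional $\phi(p)$ equal to the number of sides of $p$ contained in $A$. Since any spherically convex polygon has at most one side on a given great circle, $\phi \in \{0,1\}$; in particular $\phi(\Sp^2_\pm) = 1$ with $A$ as their degenerate bounding side, so $\Sigma_\phi(Y_0) = 2$ and, under the multi-set convention for $\bS$, one has $\Sigma_\phi(Y_t) = \sum_{s \in \bS} \mathbf{1}(s \subset A)$.

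To compute $\mathbb{L}\Sigma_\phi$, fix a cell $p$, a great circle $g \in [p]$, write $a_p := \partial p \cap A$, and analyse the increment $\Delta\phi(p,g) := \phi(p \cap g^+) + \phi(p \cap g^-) - \phi(p)$. The key geometric observation is that every $p \in \bZ(Y_t) \setminus \{\Sp^2_+, \Sp^2_-\}$ is contained in an open hemisphere, so $\sigma_1(a_p) \leq 1/2$, and hence $|g \cap a_p| \in \{0,1\}$ for $\tau$-a.e.\ $g$ (two intersection points inside $a_p$ would have to be antipodal, which is a $\tau$-null event). This yields $\Delta\phi(p,g) = |g \cap a_p|$ for $p \notin \{\Sp^2_\pm\}$: the $A$-side, if present, is either inherited by exactly one sub-cell or is split in two. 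For $p \in \{\Sp^2_\pm\}$ every $g \in \sG(\Sp^2)$ cuts $p$ into two lunes with $A$-sides, so $\Delta\phi(p,g) \equiv 1$.

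Combining this case analysis with the Crofton-type identity $\int_{\sG(\Sp^2)} |g \cap B|\,\tau(\dint g) = 2\sigma_1(B)$ for Borel $B \subset A$---which follows from parameterising $g$ via its pole and noting that the associated random point on $A$ is uniform---one obtains $\int_{[p]} \Delta\phi(p,g)\,\tau(\dint g) = 2\sigma_1(a_p)$ if $p \notin \{\Sp^2_\pm\}$ and $=1$ otherwise. The tessellation identity $\sum_{p \in \bZ(T)} \sigma_1(\partial p \cap A) = 2$ (every non-corner point of $A$ belongs to the boundary of exactly two cells) then gives
\begin{align*}
\mathbb{L}\Sigma_\phi(T) = 4 - \mathbf{1}(\Sp^2_+ \in \bZ(T)) - \mathbf{1}(\Sp^2_- \in \bZ(T)).
\end{align*}
Since $\PP(\Sp^2_\pm \in \bZ(Y_s)) = e^{-s}$, the martingale identity and integration over $[0,t]$ yield $\EE \sum_{s \in \bS} \mathbf{1}(s \subset A) = 2 + \int_0^t (4 - 2 e^{-s})\,\dint s = 4t + 2 e^{-t}$, which is the desired side formula.

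For the vertex formula I would then use a combinatorial reduction. Almost surely, a vertex of $Y_t$ on $A$ is an endpoint on $A$ of a maximal segment from exactly one of the two hemispheres (simultaneous hits from both hemispheres form a $\PP$-null set). Inside $\Sp^2_+$, if $\Sp^2_+$ has already been split then the associated vertices on $A$ partition $A$ into exactly that many arcs, each being the $A$-side of a unique cell contained in $\Sp^2_+$; if $\Sp^2_+$ is still a cell it contributes one degenerate side and no vertices on $A$. Adding the analogous statement for $\Sp^2_-$ yields the pathwise identity $\sum_{s \in \bS} \mathbf{1}(s \subset A) = \sum_{v \in \bV} \mathbf{1}(v \in A) + \mathbf{1}(\Sp^2_+ \in \bZ(Y_t)) + \mathbf{1}(\Sp^2_- \in \bZ(Y_t))$, whose expectation delivers $\EE \sum_{v \in \bV} \mathbf{1}(v \in A) = (4t + 2 e^{-t}) - 2 e^{-t} = 4t$. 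The main obstacle is the careful bookkeeping around the degenerate initial hemispheres $\Sp^2_\pm$, which is precisely the source of both the $2e^{-t}$ correction in the side count and its cancellation in the vertex count.
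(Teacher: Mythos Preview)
Your argument is correct and follows essentially the same route as the paper: a martingale computation for the side count on $A$ combined with the pathwise identity linking sides on $A$ to vertices on $A$ via the indicator that $\Sp^2_\pm$ is still uncut. The only cosmetic differences are that you run the martingale for both hemispheres at once (the paper restricts to $\Sp^2_+$ and doubles) and that you phrase the generator increment as $\int |g\cap a_p|\,\tau(\dint g)=2\sigma_1(a_p)$ whereas the paper writes the same quantity as $\tau([p\cap A])=2v_1(p\cap A)$ and invokes additivity of $v_1$.
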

\begin{proof}
Let $T_\pm$ be the random lifetimes of the two hemispheres $\mathbb{S}_\pm^2$ bounded by $A$. If $T_+>t$ and $T_->t$ then, at time $t$, the number of vertices on $A$ is zero, while $Y_t$ has two cells. If exactly one of the two lifetimes $T_\pm$ exceeds $t$ (i.e., $T_+>t$ and $T_-\leq t$, or $T_+\leq t$ and $T_->t$), then the difference between the number of vertices on $A$ and the number of cells of $Y_t$ that have non-empty intersection with $A$ is $1$, while if $T_+\leq t$ and $T_-\leq t$ the difference is zero. Thus, almost surely,
$$
\sum_{v\in\bV}{\bf 1}(v\in A) = \Big[\sum_{p\in\bZ}{\bf 1}(p\cap A\neq\emptyset)\Big]-{\bf 1}(T_+>t)-{\bf 1}(T_->t)\,.
$$
Taking expectations we see that
\begin{align}\label{eq:Equator1}
\EE\sum_{v\in\bV}{\bf 1}(v\in A) = \Big[\EE\sum_{p\in\bZ}{\bf 1}(p\cap A\neq\emptyset)\Big]-2e^{-t}\,,
\end{align}
since $T_+$ and $T_-$ are independent and exponentially distributed random variables with mean $1$. To determine the expectation in brackets, we use that
$$
\EE\sum_{p\in\bZ}{\bf 1}(p\cap A\neq\emptyset) = 2\,\EE\sum_{p\in\bZ(Y_t\cap\Sp_+^2)}{\bf 1}(p\cap A\neq\emptyset)
$$
and apply the martingale property of \eqref{eq:Martingale} with $\phi(p)={\bf 1}(p\subset\Sp^2_+,p\cap A\neq\emptyset)$ there to see that
\begin{align*}
& \EE\sum_{p\in\bZ(Y_t\cap\Sp_+^2)}{\bf 1}(p\cap A\neq\emptyset) \\
&= \EE\Sigma_\phi(Y_0)+\int_0^t\EE\sum_{p\in\bZ(Y_s)}\int_{[p]}[\Sigma_\phi(\oslash_{p,g}(Y_s))-\Sigma_\phi(Y_s)]\,\tau(\dint\!g)\dint\!s\\
&= 1 + \int_0^t\EE\sum_{p\in\bZ(Y_s\cap\Sp_+^2)}\int_{[p]}{\bf 1}(p\cap A\cap g\neq\emptyset)\,\tau(\dint\! g)\dint\!s\\
&=1+\int_0^t\EE\sum_{p\in\bZ(Y_s\cap\Sp_+^2)}\tau([p\cap A])\,\big({\bf 1}(T_+>s)+{\bf 1}(T_+\leq s)\big)\,\dint\!s\,.
\end{align*}
If $T_+>s$, we have that $\tau([p\cap A])=1$ for the single cell $p=\Sp_+^2\in\bZ(Y_s\cap\Sp_+^2)$. If $T_+\leq s$ then $\tau([p\cap A])=2v_1(p\cap A)$ for all $p\in\bZ(Y_s\cap\Sp_+^2)$. Since $\PP(T_+>s)=e^{-s}$ and since the functional $v_1$ is additive and satisfies $v_1(A)=1$, this leads to
\begin{align*}
\EE\sum_{p\in\bZ(Y_t\in\Sp_+^2)}{\bf 1}(p\cap A\neq\emptyset) &= 1 + \int_0^te^{-s}+2(1-e^{-s})\EE\sum_{p\in\bZ(Y_s\cap\Sp_+^2)}v_1(p\cap A)\,\dint\!s\\
&=1 + \int_0^te^{-s}+2(1-e^{-s})\,\dint\!s = 2t+e^{-t}\,,
\end{align*}
whence
\begin{equation}\label{eq:SidesEquator}
\EE\sum_{p\in\bZ}{\bf 1}(p\cap A\neq\emptyset) = 2\,\EE\sum_{p\in\bZ(Y_t\cap\Sp_+^2)}{\bf 1}(p\cap A\neq\emptyset) = 4t+2e^{-t}\,,
\end{equation}
which proves the claim about the sides on the equator. In conjunction with \eqref{eq:Equator1} we also conclude that
$$
\EE\sum_{v\in\bV}{\bf 1}(v\in A) = (4t+2e^{-t})-2e^{-t} = 4t
$$
and this completes the proof of the lemma.
\end{proof}

\begin{remark}
An alternative proof of Lemma \ref{lem:KnotenAufAequator} can be given by means of Theorem \ref{thm:intersection} in conjunction with Remark \ref{rem:InterEquator}. We have decided to embed the result into our self-contained approach, which only relies on the martingale property presented in Lemma \ref{lem:martingale}.
\end{remark}

\begin{proof}[Proof of Theorem \ref{thn:Adjacencies} with $\bX=\bZ$]
Clearly, $\mu_{\bZ\bZ}=1$. Next, we notice that
$$
\mu_{\bZ\bV}={3\lambda_\bV\over\lambda_\bZ}\,,\qquad\text{and}\qquad \mu_{\bZ\bE} = \mu_{\bZ\bV}\,,
$$
since each vertex belongs to precisely $3$ cells. Applying Theorem \ref{thm:Intensities} leads to the precise values.
\end{proof}

\begin{proof}[Proof of Theorem \ref{thn:Adjacencies} with $\bX=\bE$]
Trivially, one has that $\mu_{\bE\bE}=1$. Moreover, since each edge is contained in the boundary of exactly two cells, we have that $\mu_{\bE\bZ}=\mu_{\bE\bS}=2$ and since each edge has exactly two endpoints, we find that $\mu_{\bE\bV}=2$. Finally, each edge is contained in exactly one maximal segment, except in the case that it lies on the equator (which in turn is not a maximal segment). In view of Lemma \ref{lem:KnotenAufAequator} we know that the mean number of edges on $A$ is $4t$ and hence $\mu_{\bE\bM}=(\lambda_\bE-4t)/\lambda_\bE$. An application of Theorem \ref{thm:Intensities} completes the proof in this case.
\end{proof}

\begin{proof}[Proof of Theorem \ref{thn:Adjacencies} with $\bX=\bV$]
Clearly, $\mu_{\bV\bV}=1$. Each vertex is contained in the boundary of precisely $3$ cells and is endpoint of precisely $3$ edges and $5$ sides. This implies $\mu_{\bV\bZ}=\mu_{\bV\bE}=3$ and $\mu_{\bV\bS}=5$. To compute $\mu_{\bV\bM}$ we observe that each vertex is the endpoint of exactly one maximal segment and is contained in the relative interior of exactly one maximal segment, except of the case that the vertex is located on the equator, in which case it is only the endpoint of one maximal segment. This leads to the identity
$$
\mu_{\bV\bM}={2(\lambda_\bV-4t)\over \lambda_\bV}+{4t\over \lambda_\bV}
$$
and an application of Theorem \ref{thm:Intensities} yields the precise value.
\end{proof}

\begin{proof}[Proof of Theorem \ref{thn:Adjacencies} with $\bX=\bS$]
We have that $\mu_{\bS\bE}=2\lambda_\bE/\lambda_\bS$ and $\mu_{\bS\bV}=5\lambda_\bV/\lambda_\bS$, since each edge belongs to precisely two sides and since each vertex is the endpoint of precisely $4$ sides and is contained in the relative interior of a further side. To compute $\mu_{\bS\bM}$ we notice that each side is contained in exactly one maximal segment, the only exception being sides on the equator, which are contained in no maximal segment. Lemma \ref{lem:KnotenAufAequator} implies that the mean number of sides of $Y_t$ on $A$ equals $4t+2e^{-t}$. This gives the relation
$$
\mu_{\bS\bM}={\lambda_\bS-4t-2e^{-t}\over\lambda_\bS}\,.
$$
Now, application of Theorem \ref{thm:Intensities} completes the proof.
\end{proof}

\begin{proof}[Proof of Theorem \ref{thn:Adjacencies} with $\bX=\bM$]
Clearly, $\mu_{\bM\bM}=1$. Moreover, using Lemma \ref{lem:KnotenAufAequator} and recalling that the equator is not a maximal segment, we find that
$$
\mu_{\bM\bE}={\lambda_\bE-4t\over\lambda_\bM}\,,\qquad \mu_{\bM\bV} = {2\lambda_\bV-4t\over\lambda_\bM}\qquad\text{and}\qquad\mu_{\bM\bS} = {\lambda_\bS-4t-2e^{-t}\over\lambda_\bM}
$$
and Theorem \ref{thm:Intensities} completes the proof of Theorem \ref{thn:Adjacencies}.
\end{proof}

\subsection{Proof of Proposition \ref{prop:GreatCircles}}

Let $N$ be a Poisson random variable with parameter $t\geq 0$. If $N=0$, the great circle tessellation has two cells $\Sp_\pm^2$ that are bounded by the equator, no edge and no vertex. If otherwise $N\geq 1$, we have $N(N+1)$ vertices, $2N(N+1)$ edges and $N^2+N+2$ cells as one easily checks by induction (see Equation (6.1) in \cite{MilesSphere}). Thus,
\begin{align*}
L_\bE(Y_t^{\rm GC}) & = \sum_{k=1}^\infty 2\pi(k+1)\,\PP(N=k) = 2\pi(1+t-e^{-t})\,,\\
L_\bS(Y_t^{\rm GC}) & = 4\pi\,\PP(N=0) +\sum_{k=1}^\infty 4\pi(k+1)\,\PP(N=k) = 4\pi(t+1)
\end{align*}
and
\begin{align*}
\lambda_\bV(Y_t^{\rm GC}) &= \sum_{k=1}^\infty k(k+1)\,\PP(N=k) = t^2+2t\,,\\
\lambda_\bE(Y_t^{\rm GC}) &= \sum_{k=1}^\infty 2k(k+1)\,\PP(N=k) = 2(t^2+2t)\,,\\
\lambda_\bZ(Y_t^{\rm GC}) &= 2\,\PP(N=0)+\sum_{k=1}^\infty (k^2+k+2)\,\PP(N=k) = t^2+2t+2\,.\\
\end{align*}
Moreover, for $a_\bZ(Y_t^{\rm GC})$, $\ell_{\partial\bZ}(Y_t^{\rm GC})$ and $\ell_\bE(Y_t^{\rm GC})$ we have that
$$
a_\bZ(Y_t^{\rm GC}) = {4\pi\over\lambda_\bZ(Y_t^{\rm GC})} = {4\pi\over t^2+2t+2}\,,\qquad\ell_{\partial\bZ}(Y_t^{\rm GC}) = {L_\bS(Y_t^{\rm GC})\over\lambda_\bZ(Y_t^{\rm GC})} = {4\pi(t+1)\over t^2+2t+2}
$$
and $\ell_\bE(Y_t^{\rm GC})={L_{\bE}(Y_t^{\rm GC})\over\lambda_\bE(Y_t^{\rm GC})}={\pi(1+t-e^{-t})\over t^2+2t}$. This completes the proof.\hfill $\Box$

\subsection{Proof of Theorem \ref{thm:Capacity}}

Let us first suppose that $C$ is connected and observe that
$$
\PP(Y_t\cap C=\emptyset) = \EE\sum_{p\in\bZ}{\bf 1}(C\subset p)\,.
$$
It then follows from Lemma \ref{lem:martingale} that the stochastic process defined there with the special choice $\phi(p):={\bf 1}(p\subset C)$ is a martingale. Upon taking expectations in \eqref{eq:Martingale} this yields
\begin{align*}
\PP(Y_t\cap C=\emptyset) = \EE\Sigma_\phi(Y_t) &= 
\EE\Sigma_\phi(Y_0)+\int_0^t \EE\sum\limits_{p\in\bZ(Y_s)}\int_{[p]}[\Sigma_\phi(\oslash_{p,g}(Y_s))-\Sigma_\phi(Y_s)]\,\tau(\dint\!g)\,\mathrm{d}s\\
&=1-\int_0^t \EE\sum_{p\in\bZ(Y_s)}{\bf 1}(C\subset p) \int_{[p]}{\bf 1}(C\cap g\neq\emptyset)\,\tau(\dint\!g)\,\mathrm{d}s\\
&=1-\tau([C])\int_0^t\EE\sum_{p\in\bZ(Y_s)}{\bf 1}(C\subset p)\,\mathrm{d}s\\
&=1-\tau([C])\int_{0}^t\mathbb{P}(Y_s\cap C=\emptyset)\,\mathrm{d}s\,.
\end{align*}
This leads to the integral equation
\begin{align*}
y(t)=1-\tau([C]) \int_0^ty(s)\,\mathrm{d}s\qquad\text{with}\qquad y(0)=1
\end{align*}
for $y(t):=\PP(Y_t\cap C=\emptyset)=1-T_{Y_t}(C)$, which has the unique solution $y(t)=e^{-t\tau([C])}$.

We assume now that $C$ has more than one connected component and partition the event that $Y_t\cap C=\emptyset$ as follows:
\begin{align*}
1-T_{Y_t}(C) &=\PP(Y_t\cap C=\emptyset) \\
&= \PP(Y_t\cap C=\emptyset,Y_t\cap\conv(C)=\emptyset)+\PP(Y_t\cap C=\emptyset,Y_t\cap\conv(C)\neq\emptyset)\,.
\end{align*}
Since $\PP(Y_t\cap C=\emptyset,Y_t\cap\conv(C)=\emptyset) = \PP(Y_t\cap\conv(C)=\emptyset)$ and since $\conv(C)$ is connected, we have from the first part of the proof that
$$
1-T_{Y_t}(C) = e^{-t\tau([\conv(C)])} + \PP(Y_t\cap C=\emptyset,Y_t\cap\conv(C)\neq\emptyset)\,.
$$
If $Y_t\cap C=\emptyset$ and $Y_t\cap\conv(C)\neq\emptyset$, there exists a time $s\in(0,t)$ at which the connected components of $C$ are separated for the first time by a great circle (different from $A$) into two parts $Z_1=\bigcup_{j\in J}C_j$, $Z_2=\bigcup_{j\in J^c}C_j$, where $J$ is a proper non-empty subset of $\{1,\ldots,m\}$. Moreover, after this separation, the sets $Z_1$ and $Z_2$ are not intersected by spherical line segments that are constructed in the remaining time interval $(s-t)$. Using the Markovian description of the continuous time dynamic of $(Y_t)_{t>0}$ this means that
\begin{align*}
& \PP(Y_t\cap C=\emptyset,Y_t\cap\conv(C)\neq\emptyset)\\
&= \sum_{Z_1,Z_2}\tau([Z_1|Z_2])\int_0^t\PP(Y_s\cap\conv(C)=\emptyset)\PP(Y_{t-s}\cap Z_1=\emptyset)\PP(Y_{t-s}\cap Z_2=\emptyset)\,\dint\!s\\
&= \sum_{Z_1,Z_2}\tau([Z_1|Z_2])\int_0^te^{-s\tau([\conv(C)])}\,(1-T_{Y_{t-s}}(Z_1))(1-T_{Y_{t-s}}(Z_2))\,\dint\!s\,.
\end{align*}
This completes the proof of the theorem.\hfill $\Box$

\subsection{Proof of Theorem \ref{thm:intersection}}

That the point processes $Y_t\cap\Sp_+^2\cap g$ and $Y_t\cap\Sp_-^2\cap g$ are independent is clear from the Markovian construction of $Y_t$. So, it is sufficient to show that $Y_t\cap(\Sp_+^2\setminus A)\cap g$ is a Poisson point process with the correct intensity measure. It is well known from the theory of random sets (see \cite[Theorem 3.6.3]{SchneiderWeil}) that for this it is enough to prove that for all $m\in\{1,2,3,\ldots\}$ and all disjoint spherical line segments (intervals) $I_1,\ldots,I_m\subset(\Sp_+^2\setminus A)\cap g$ one has that
\begin{equation}\label{eq:IntersectionToShow}
\PP(Y_t\cap I_1=\emptyset,\ldots,Y_t\cap I_m=\emptyset)=\prod_{j=1}^me^{-t\tau([I_j])}\,.
\end{equation}

Without loss of generality we can and will assume that the intervals $I_1,\ldots,I_m$ are ordered in such a way that first interval $I_1$, then interval $I_2$ etc.\ is visited when travelling along $\Sp_+^2\cap g$ from one endpoint of $A\cap g$ to the other (the choice of the orientation is irrelevant).

To establish \eqref{eq:IntersectionToShow} we proceed by induction on $m$. For $m=1$ this just follows from the formula for the capacity functional of $Y_t$ for connected argument sets in Theorem \ref{thm:Capacity}. So, let us assume that \eqref{eq:IntersectionToShow} is valid for $m-1\geq 2$ there instead of $m$. We use \eqref{eq:CapaMulti} with $C=I_1\cup\ldots\cup I_m$ and determine the terms appearing there. Since both $Z_1$ and $Z_2$ consist of less than $m$ disjoint spherical intervals, we can apply our induction hypothesis to deduce that
\begin{align*}
1-T_{Y_{t-s}}(Z_1) &= \PP(Y_{t-s}\cap Z_1=\emptyset) = \prod_{j\in J}e^{-(t-s)\tau([I_j])}\,,\\
1-T_{Y_{t-s}}(Z_2) &=\PP(Y_{t-s}\cap Z_2=\emptyset) = \prod_{j\in J^c}e^{-(t-s)\tau([I_j])}
\end{align*}
and hence
\begin{equation*}
(1-T_{Y_{t-s}}(Z_1))(1-T_{Y_{t-s}}(Z_2))= \prod_{j=1}^me^{-(t-s)\tau([I_j])}\,.
\end{equation*}
Next, we denote by $I_{j,j+1}$ the spherical interval on $g$ that is in between $I_j$ and $I_{j+1}$ for $j\in\{1,\ldots,m-1\}$, and observe that $\tau([I])=\tau([I_1])+\ldots+\tau([I_m])+\tau([I_{1,2}])+\ldots+\tau([
I_{m-1,m}])$, where $I:={\rm conv}(C)$. This implies that
\begin{align*}
e^{-s\tau([I])}\,(1-T_{Y_{t-s}}(Z_1))(1-T_{Y_{t-s}}(Z_2))= e^{-s\sum_{j=1}^{m-1}\tau([I_{j,j+1}])}\,e^{-t\sum_{j=1}^{m}\tau([I_{j}])}
\end{align*}
so that the integral in \eqref{eq:CapaMulti} equals
\begin{align*}
{1\over \sum_{j=1}^{m-1}\tau([I_{j,j+1}])}\,(1-e^{-t\sum_{j=1}^{m-1}\tau([I_{j,j+1}])})\,e^{-t\sum_{j=1}^{m}\tau([I_{j}])}\,.
\end{align*}
What remains is to compute the value of the sum over $Z_1,Z_2$ in \eqref{eq:CapaMulti}. We have that $\tau([Z_1|Z_2]) = 0$ if and only if $ {\rm conv}(Z_1)\cap{\rm conv}(Z_2) \neq \emptyset$. Thus, in order to have $\tau([Z_1|Z_2])\neq 0$, the two sets $Z_1$ and $Z_2$ must be separated by exactly one of the spherical intervals $I_{i,i+1}$. This immediately implies the relation
$$
\sum_{Z_1,Z_2}\tau([Z_1|Z_2]) = \sum_{j=1}^{m-1}\tau([I_{j,j+1}])\,.
$$
Combining these facts with \eqref{eq:CapaMulti} and the formula for $1-T_{Y_t}(I)$ that follows from the first part of Theorem \ref{thm:Capacity}, we arrive at
\begin{align*}
1-T_{Y_t}(I) &= \PP(Y_t\cap I_1=\emptyset,\ldots,Y_t\cap I_m=\emptyset)\\
&= e^{-t\big(\sum_{j=1}^{m}\tau([I_{j}])+\sum_{j=1}^{m-1}\tau([I_{j,j+1}])\big)}+(1-e^{-t\sum_{j=1}^{m-1}\tau([I_{j,j+1}])})\,e^{-t\sum_{j=1}^{m}\tau([I_{j}])}\\
&=e^{-t\sum_{j=1}^{m}\tau([I_{j}])} = \prod_{j=1}^m e^{-t\tau([I_j])}\,.
\end{align*}
In view of \eqref{eq:IntersectionToShow} this completes the proof.\hfill $\Box$

\begin{acknowledgements}
We would like to thank Eva-Maria Gassner for producing the photographs shown in Figure 1.\\ CD and CT acknowledge the support of SFB-TR 12. JH has been supported by GRK 2131.
\end{acknowledgements}

\bibliography{STITSphere}

\end{document}